\def\hpq0{h^{p,q}_{\leq 0}}
\def\Hpq0{\H_{\leq 0}^{p,q}}
\def\ds{\partial}
\def\C{{\mathbb C}}
\def\Cn{\C^n}
\def\H{{\mathcal H}}
\def\O{{\mathcal O}}
\def\cdelta{C_{\delta} }
\def\be{\begin{equation}}
\def\ee{\end{equation}}
\def\bl{\begin{lma}}
\def\el{\end{lma}}
\def\XXint#1#2#3{{\setbox0=\hbox{$#1{#2#3}{\int}$}
\vcenter{\hbox{$#2#3$}}\kern-.5\wd0}}
\newtheorem{thm}{Theorem}[section]
\newtheorem{lma}[thm]{Lemma}
\newtheorem{cor}[thm]{Corollary}
\newtheorem{prop}[thm]{Proposition}
\theoremstyle{definition}
\newtheorem{df}{Definition}
\theoremstyle{remark}
\newtheorem{preremark}{Remark}
\newtheorem{preex}{Example}
\newenvironment{remark}{\begin{preremark}}{\qed\end{preremark}}
\newenvironment{ex}{\begin{preex}}{\qed\end{preex}}
\numberwithin{equation}{section}
\begin{document}
\title[]
{A new generalization of the Lelong number.}

\author{Aron Lagerberg}

\address{A Lagerberg :Department of Mathematics\\Chalmers University
  of Technology 
  and the University of G\"oteborg\\S-412 96 G\"OTEBORG\\SWEDEN,\\}
\begin{abstract}
We will introduce a quantity which measures the singularity of
a plurisubharmonic function $\varphi$ relative to another plurisubharmonic function $\psi$, at a point $a$.
We denote this quantity by $ \nu_{a,\psi}(\varphi)$.
It can be seen as a generalization of the classical Lelong number in a natural way: 
if $\psi=(n-1)\log| \cdot - a|$ where $n$ is the dimension of the set where $\varphi$ is defined, then  $\nu_{a,\psi}(\varphi)$ coincides with the classical Lelong number of $\varphi$ at the point $a$.
The main theorem of this article says that the upper level sets of our generalized Lelong number, i.e. the 
sets of the form $ \{z: \nu_{z,\psi}(\varphi) \geq c \}$ where $c>0$,
are in fact analytic sets, provided that the \textit{weight} $\psi$ satisfies some additional conditions.
\end{abstract}

\email{ aronl@chalmers.se}
\maketitle
\tableofcontents

\section{Introduction} \label{introduction}
In what follows, we let $\Omega$ denote an open subset of $\C^n$, $ \varphi$  a plurisubharmonic function in $\Omega$,
and $\psi$ a plurisubharmonic function in $\Cn$. When we are dealing with constants, we often let the same symbol
denote different values when the explicit value does not concern us.
The object of this paper is to introduce a generalization of the classical Lelong number:
The quantity we will consider depends on two plurisubharmonic functions $\varphi, \psi$
and it will be a measurement of the singularity of $\varphi$ relative to $\psi$.
Moreover, if we let $\psi(z) = (n-1)\log|z-a|$ we get back the classical Lelong number of $\varphi$ at the point $a$.
The main theorem of this paper (Theorem \ref{analyticity_thm}) tells us that this generalized Lelong number satisfies a semi-continuity property of the same
type as the classical Lelong number does, namely, its super level-sets define analytic varieties. 
Also, we investigate what further properties this quantity satisfies.
The paper is organized as follows: in this introduction we define the generalized Lelong number and discuss the motivation behind it.
In section (\ref{properties_and__examples}) we explore some basic properties and examples of the generalized Lelong number, obtaining
as corollaries classical results concerning the classical Lelong number. Section \ref{analyticity_section} concerns the theorem stating
that the upper level-sets of the generalized Lelong number defines an analytic set. In section \ref{approx_section} we prove a theorem
due to Demailly, which states that one can approximate plurisubharmonic functions well with Bergman functions with respect to a certain weight.
Using a different weight we obtain a slightly better estimate. In section \ref{kiselmans_section} we relate our generalized
Lelong number to another generalization due to Kisleman (cf. \cite{Kiselman}).
\\ \\
Let us begin by recalling some relevant definitions.
For $r>0$ define
\be \label{leldef} 
 \nu ( \varphi,a,r) := \frac{\sup_{|z-a|=r} \varphi(z)}{\log r}.
 \ee
The function in the nominator can actually be seen to be a convex function of $\log r$ (cf. \cite{Lelong}). Furthermore, the fraction is increasing in $r$, and so the limit as $r$ tend to 0 exists:

\begin{df}
The (classical) Lelong number of $\varphi$ at $a\in\Omega$ is defined as
\be \label{lel_def_2}
\nu (\varphi, a) = \lim_{r \rightarrow 0} \nu ( \varphi,a,r).
\ee
\end{df}

As can be seen from the definition, the Lelong number compares the behaviour of $\varphi$ to that of $\log(|z-a|)$, as $z \rightarrow a$.
In fact (cf. \cite{Kiselman}), the following is true:
if $\nu (\varphi, a)=\tau$
then, near the point $a$, 
\be \label{liminfcond}
\varphi(z) \leq \tau \log|z-a| + O(1),
\ee
 and $\tau$ is the best constant possible. 
Two other ways to represent the classical Lelong number are given by the following equalities:
$$\nu (\varphi, a) = 
\liminf_{z \rightarrow a} \frac{ \varphi(z)}{\log|z-a|}
$$
and
$$
\nu (\varphi, a)
= \lim_{r \rightarrow 0} \int_{B(a,r)} (d d^c \varphi(z)) \wedge (dd^c \log|z-a|^2)^{n-1}.
$$
The first of these equalities is a simple consequence of (\ref{leldef}) (cf.\cite{Rashkovskii}), while the other
follows from Stokes' theorem (cf.\cite{Demailly}). Two generalizations of the Lelong number, due to Rashkovskii 
and Demailly 
respectively, come from exchanging $\log|z-a|$ for a different plurisubharmonic function $\psi$ in the charactarizations of the Lelong number above (cf \cite{Rashkovskii} and \cite{Demailly} respectively).
To that effect, the relative type of $\varphi$ with respect to a function $\psi$ is given by
\be \label{relativetypedef}
\sigma_a (\varphi, \psi) = 
\liminf_{z \rightarrow a} \frac{ \varphi(z)}{\psi(z-a)},
\ee
and Demailly's generalized Lelong number of $\varphi$ with respect to $\psi$ is given by
\be
\nu_{Demailly} (\varphi, \psi)
= \lim_{r \rightarrow 0} \int_{\psi < \log r} (d d^c \varphi) \wedge (dd^c \psi)^{n-1}.
\ee
Of course, in each of the definitions above, $\psi$ needs to satisfy some regularity conditions; for our discussion
it suffices to assume that $ e^\psi $ is continuous and $\{ \psi = - \infty \} = \{ 0 \}$.
\\
The inspiration for this article comes from an observation made by Berndtsson,
who in the article  \cite{Berndtsson} relates the classical Lelong number to the convergence of a certain integral:

\begin{thm} \label{berndtssons_sats}
For $a \in \Omega$,
$$ \nu_{a} (\varphi) \geq 1 \Longleftrightarrow \int_a e^{-2 \varphi(\zeta) - 2(n-1)\log |\zeta - a|} d \lambda(\zeta) = + \infty.$$
\end{thm}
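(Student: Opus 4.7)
The plan is to prove the two implications separately. The forward implication follows almost directly from the Kiselman characterization (\ref{liminfcond}); the reverse implication is the substantial part and is most cleanly handled by a ``complex polar coordinate'' identity together with the one-dimensional Skoda integrability criterion (alternatively, via an Ohsawa--Takegoshi extension from the point $a$).

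For the direction $\nu_a(\varphi)\ge 1\Rightarrow$ integral diverges, set $\tau:=\nu_a(\varphi)$. By (\ref{liminfcond}) there exist $C,\rho>0$ such that $\varphi(z)\le\tau\log|z-a|+C$ whenever $|z-a|<\rho$. Shrinking $\rho$ so that $\log|z-a|<0$ on this neighbourhood and using $\tau\ge 1$, the multiplicative constant $\tau$ can be replaced by $1$, giving
\begin{equation*}
\varphi(z)\ \le\ \log|z-a|+C,\qquad |z-a|<\rho .
\end{equation*}
Exponentiating and dividing by $|z-a|^{2(n-1)}$ yields the pointwise lower bound $e^{-2\varphi(\zeta)}|\zeta-a|^{-2(n-1)}\ge e^{-2C}|\zeta-a|^{-2n}$, whose integral over any punctured neighbourhood of $a$ is $+\infty$ by a direct computation in polar coordinates on $\R^{2n}$.

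For the converse, the key observation is the identity
\begin{equation*}
\int_{B(a,R)}\frac{e^{-2\varphi(z)}}{|z-a|^{2(n-1)}}\,d\lambda(z)\;=\;c_n\int_{\P^{n-1}}\int_{|\zeta|<R}e^{-2\varphi(a+\zeta\omega)}\,d\lambda_{\C}(\zeta)\,d\sigma([\omega]),
\end{equation*}
obtained by parametrising $z=a+\zeta\omega$ with $\omega$ representing a point of $\P^{n-1}$: the Jacobian factor $|\zeta|^{2n-2}$ cancels the weight $|z-a|^{-2(n-1)}$ exactly. The inner integral is the one-variable Skoda integral of the subharmonic restriction $\varphi|_{L_\omega}$, which is finite iff $\nu_0(\varphi|_{L_\omega})<1$. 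Since the slice Lelong numbers satisfy $\nu_0(\varphi|_{L_\omega})\ge\nu_a(\varphi)$ with equality off a thin exceptional set, the assumption $\nu_a(\varphi)<1$ guarantees finiteness of the inner integral for almost every $\omega$; a quantitative 1-D Skoda estimate then upgrades this to integrability of the whole average over $\P^{n-1}$. Alternatively, one can obtain the same conclusion by invoking the Ohsawa--Takegoshi extension from $\{a\}$ to produce a holomorphic $F$ with $F(a)=1$ and $\int|F|^2e^{-2\varphi}|z-a|^{-2(n-1)}d\lambda<\infty$, and then use that $|F|^2\ge\frac12$ near $a$.

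The main obstacle lies in this converse direction: one has to make sure that the 1-D estimate (or the $L^2$-extension with singular weight) is quantitative enough that the integration over $\P^{n-1}$ survives, rather than merely being a statement about almost every slice. The numerical threshold ``$<1$'' in $\nu_a(\varphi)<1$ is matched precisely by the degree $|z-a|^{-2(n-1)}$ of the weight through the cancellation in the polar identity above, which explains why the condition comes out with this particular constant.
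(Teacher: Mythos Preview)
Your forward implication is correct and in fact cleaner than the paper's own argument: the paper proves $\nu_a(\varphi)\ge 1\Rightarrow$ divergence via the polar identity together with the fact that $\nu_a(\varphi|_L)=\nu_a(\varphi)$ for almost every complex line $L$ through $a$, so each slice integral diverges; your direct use of (\ref{liminfcond}) avoids slicing altogether.

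The reverse implication has a genuine gap. The polar identity you write is literally the original integral rewritten, so appealing to it and then asserting that ``a quantitative 1-D Skoda estimate upgrades a.e.\ slice finiteness to integrability over $\P^{n-1}$'' is circular: what you must prove is precisely that this double integral is finite. No uniform one-variable bound of the required kind exists---for the family $u_k(t)=\log|t-1/k|$ on the unit disk one has $\nu_0(u_k)=0$ and $\sup u_k\le\log 2$, yet $\int_{|t|<1/2}e^{-2u_k}=+\infty$ once $1/k<1/2$---so knowing $\nu_0(\varphi|_{L_\omega})<1$ for a.e.\ $\omega$ gives no control on the \emph{size} of the slice integrals as a function of $\omega$. (Note that the polar identity works perfectly in the direction the paper uses it: if almost every slice integral is $+\infty$ then the double integral is $+\infty$, by Fubini. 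It is the convergence direction that needs more.)

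Your Ohsawa--Takegoshi alternative is closer but also incomplete as stated: extension from the single point $\{a\}$ yields $F$ with $\int_\Omega |F|^2 e^{-2\varphi}|z-a|^{-2(n-\epsilon)}d\lambda\le C_\epsilon\,e^{-2\varphi(a)}$, which is vacuous when $\varphi(a)=-\infty$, and $\nu_a(\varphi)<1$ does not preclude this (take $\varphi=\tfrac12\log|\cdot-a|$). The paper sidesteps this by starting not from the point $a$ but from a generic complex \emph{line} $L$ through $a$, where $\nu_a(\varphi|_L)=\nu_a(\varphi)<1$ is, in dimension one, equivalent to $\int_L e^{-2\varphi}<\infty$. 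It then applies the hypersurface version of Ohsawa--Takegoshi (inequality (\ref{ohtineq})) iteratively $n-1$ times, extending the function $1$ from $L\simeq\C$ to $\C^2,\ldots,\C^n$ and picking up a factor $|z_j|^{-(2-2\delta)}$ at each step. This gives
\[
\int_\Omega \frac{e^{-2\varphi}}{|z_1|^{2-2\delta}\cdots|z_{n-1}|^{2-2\delta}}\,d\lambda<\infty,
\]
and since $|z|\ge|z_j|$ for each $j$ one obtains $\int_\Omega e^{-2\varphi}|z|^{-2(n-1)(1-\delta)}d\lambda<\infty$ for every $\delta>0$. A final H\"older trick (Lemma~\ref{singularitylemma}), combined with the freedom to replace $\varphi$ by $\varphi/(1-r)$ while keeping the Lelong number below $1$, removes the $\delta$. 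That iterated extension from a line is the missing ingredient in your argument.
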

We will giva a simplified proof of this Theorem in section \ref{properties_and__examples}.
Using this result, assuming $\nu_{a} (\varphi) = \tau$ so that $\nu_{a} (\varphi/ \tau)=1$, we see that 
$$ \int_a e^{-2 \frac{\varphi(\zeta)}{s} - 2(n-1)\log |\zeta - a|} d \lambda(\zeta) = 
\int_a e^{-2 \frac{\tau \varphi(\zeta)}{s \tau} - 2(n-1)\log |\zeta - a|} d \lambda(\zeta)$$
is infinite iff $s \leq \tau$.
Thus Theorem \ref{berndtssons_sats} implies that the classical Lelong number coincides with the number given by 
$$\inf{ \{ s>0:  \zeta \mapsto e^{ - \frac{2 \varphi (\zeta)}{s} - 2 (n-1) \log \left| \zeta - a \right|   } \in L_{Loc}^1(a) \} }.$$
In \cite{Berndtsson} the following generalization of the classical Lelong number is indicated:
\begin{df}
The \textit{generalized Lelong number of} $\varphi$ at $a\in \Omega$ with respect to a plurisubharmonic function $\psi$, is defined as 
$$
\nu_{a,\psi} (\varphi) = \inf{ \{ s>0:  \zeta \mapsto e^{ - \frac{2 \varphi (\zeta)}{s} - 2 \psi( \zeta - a )  } \in L_{Loc}^1(a) \} }. 
$$
\end{df}

Obviously, some condition regarding the integrability of $e^{-2\psi}$ is needed for the definition to provide us with something of interest;
for our purpose, it is sufficient to assume that 
\be \label{integrabilitycondition2}
e^{-2(1+\tau)\psi} \in L_{Loc}^1(a),
\ee  for some $\tau>0$.
We single out the following special case of the generalized Lelong number:
\begin{df}
For $t \in [0,n)$ we define
$$
\nu_{a,t} (\varphi) = \inf{ \{ s>0:  \zeta \mapsto e^{ - \frac{2 \varphi (\zeta)}{s} - 2 t \log \left| \zeta - a \right|  } \in L_{Loc}^1(a) \} }, 
$$
that is,  $\nu_{a,t} (\varphi):= \nu_{a,\psi} (\varphi)$, with $\psi=t \log \left| \zeta - a \right|$. 
\end{df}

Theorem \ref{berndtssons_sats} shows that $\nu_{a,n-1}$ equals the classical Lelong number, which we sometimes will denote by just $\nu_{a} $. 
For $t=0$, $\nu_{a,t}=\nu_{a,0}$ equals another well known quantity, the so called \textit{ integrability index of $\varphi$ at $a$}.
Thus $\nu_{a,t }$ can be regarded as a family of numbers which interpolate between the classical Lelong number and the integrability index of $\varphi$,
as $t$ ranges between $0$ and $n-1$.
One should put this in context with the following important inequality, due to Skoda (cf. \cite{Skoda}):

\begin{thm}(Skoda's inequality) For $\varphi \in PSH(\Omega)$,
\be \label{skoda_ineq}
 \nu_{z,0}(\varphi) \leq \nu_{z,n-1}(\varphi) \leq n \nu_{z,0}(\varphi).
\ee
\end{thm}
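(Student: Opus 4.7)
My plan is to reduce each of the two inequalities to an elementary local-integrability statement following directly from the definition of $\nu_{z,t}$.

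For the left inequality $\nu_{z,0}(\varphi) \leq \nu_{z,n-1}(\varphi)$, I would argue by monotonicity of the integrand. If $s > \nu_{z,n-1}(\varphi)$, then $e^{-2\varphi/s}|\zeta-z|^{-2(n-1)}$ is locally integrable near $z$. On a ball $B(z,r)$ with $r \leq 1$ one has $|\zeta-z|^{-2(n-1)} \geq 1$, so
\[ e^{-2\varphi(\zeta)/s} \leq e^{-2\varphi(\zeta)/s}|\zeta-z|^{-2(n-1)} \]
on $B(z,r)$, whence $e^{-2\varphi/s}$ is itself in $L^1_{\loc}(z)$. This gives $s \geq \nu_{z,0}(\varphi)$, and taking the infimum over admissible $s$ yields the claim.

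For the right inequality $\nu_{z,n-1}(\varphi) \leq n\nu_{z,0}(\varphi)$, the plan is to pick any $s > \nu_{z,0}(\varphi)$ and prove via H\"older's inequality that $ns \geq \nu_{z,n-1}(\varphi)$. With conjugate exponents $p,q$ one would estimate
\[ \int_B e^{-2\varphi/(ns)}|\zeta-z|^{-2(n-1)}\,d\lambda \leq \left(\int_B e^{-2p\varphi/(ns)}\,d\lambda\right)^{1/p}\left(\int_B |\zeta-z|^{-2(n-1)q}\,d\lambda\right)^{1/q}. \]
Working in real coordinates on $\C^n \simeq \R^{2n}$, a polar-coordinate computation shows that the second factor is finite on a small ball around $z$ iff $2(n-1)q < 2n$, i.e.\ iff $p > n$. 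The first factor is finite precisely when $ns/p > \nu_{z,0}(\varphi)$, i.e.\ when $p < ns/\nu_{z,0}(\varphi)$. Since $s > \nu_{z,0}(\varphi)$, the interval $(n,\,ns/\nu_{z,0}(\varphi))$ is nonempty, and any $p$ in it makes both factors finite on a sufficiently small ball. Hence $ns \geq \nu_{z,n-1}(\varphi)$, and letting $s \searrow \nu_{z,0}(\varphi)$ concludes.

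The main obstacle is choosing the H\"older exponent carefully: the naive choice $p = n$ fails because the radial integral $\int_B |\zeta-z|^{-2n}\,d\lambda$ diverges logarithmically. It is precisely the strict inequality $s > \nu_{z,0}(\varphi)$ which leaves enough slack to push $p$ strictly above $n$ while keeping $e^{-2p\varphi/(ns)}$ locally integrable, and this balance is the heart of the argument.
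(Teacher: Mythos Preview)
Your proof is correct and considerably more elementary than the paper's. Working directly with the integral definition of $\nu_{z,t}$, you obtain the left inequality by a one-line monotonicity argument and the right inequality by H\"older's inequality with exponent $p \in (n,\, ns/\nu_{z,0}(\varphi))$.

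The paper takes a genuinely different route for the left inequality, which it calls the ``hard part''. It proves the implication $\nu_{z,n-1}(\varphi) < 1 \Rightarrow \nu_{z,0}(\varphi) < 1$ via the Ohsawa--Takegoshi extension theorem (Lemma~\ref{jonssonlma}): one uses that the classical Lelong number equals its value on a generic complex line, where in one variable it coincides with the integrability index, and then extends the constant function $1$ from that line back to~$\Cn$. This argument implicitly identifies $\nu_{z,n-1}$ with the classical Lelong number, i.e.\ it presupposes Theorem~\ref{berndtssons_sats}. From your purely integral viewpoint that same implication is the \emph{trivial} direction, and the depth is relocated into Theorem~\ref{berndtssons_sats} (which the paper establishes separately, again via Ohsawa--Takegoshi). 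The paper's approach buys a clear geometric picture and reuses the extension machinery that drives its other results; yours buys a self-contained argument needing nothing beyond the definitions and H\"older, and it also supplies a direct proof of the right inequality, which the paper leaves unargued. One small quibble: you say the first H\"older factor is finite ``precisely when'' $ns/p > \nu_{z,0}(\varphi)$; strictly this is only a sufficient condition, but that is all you need.
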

Later, we will prove the following generalization of Skoda's inequality: 
$$\nu_{z,0}(\varphi) \leq \nu_{z,n-1}(\varphi) \leq  (n-t)\nu_{z,t}(\varphi)     \leq  n \nu_{z,0}(\varphi).$$

\begin{remark}
Observe that when $n=1$ the Lelong number and integrability index of a function coincide. This 
follows from, for instance, Theorem \ref{berndtssons_sats}.
\end{remark}

A well known important result concerning classical Lelong numbers, due to Siu (cf. \cite{Siu}), is the following:
\begin{thm}
The sets $\{ z \in \Omega : \nu_z( \varphi ) \geq \tau  \} $ are analytic subsets of $\Omega$, for $\tau>0$.
\end{thm}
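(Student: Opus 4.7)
The plan is to follow Demailly's approximation scheme via logarithms of weighted Bergman kernels, a device which fits naturally with the Bergman-theoretic material the paper develops in Section \ref{approx_section}. For each integer $m \geq 1$, let $\H_m$ be the Hilbert space of holomorphic functions on $\Omega$ with $\int_\Omega |f|^2 e^{-2m\varphi}\,d\lambda < \infty$, fix an orthonormal basis $\{\sigma_{j,m}\}_j$, and set
$$\varphi_m(z) := \frac{1}{2m}\log\sum_j |\sigma_{j,m}(z)|^2.$$
The first and main step is to prove, on relatively compact subsets of $\Omega$, a two-sided comparison of the form
$$\varphi(z) - \frac{C_1}{m} \;\leq\; \varphi_m(z) \;\leq\; \sup_{|w-z|<r}\varphi(w) + \frac{1}{m}\log\frac{C_2}{r^n}.$$
The upper bound follows from the extremal characterization of the Bergman kernel together with the sub-mean-value property for the holomorphic $|\sigma_{j,m}|^2$; the lower bound is the substantive point and requires the Ohsawa--Takegoshi extension theorem (or H\"ormander's $L^2$-existence theorem with an appropriate cut-off), applied to the constant function $1$ at an arbitrary point $z$, so as to produce global sections in $\H_m$ whose norms are controlled by $e^{\varphi(z)}$.

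Having the comparison, I would then read off information about Lelong numbers. Dividing through by $\log|w-z|$, which is negative near $z$, and passing to the limit $r \to 0$ via the characterization $\nu(\varphi,z)=\lim_{r\to 0}\sup_{|w-z|=r}\varphi(w)/\log r$, yields the sandwich
$$\nu(\varphi, z) - \frac{n}{m} \;\leq\; \nu(\varphi_m, z) \;\leq\; \nu(\varphi, z).$$
On the other hand, because $\varphi_m$ is the normalized logarithm of the norm squared of a holomorphic map, a direct computation gives $\nu(\varphi_m, z) = (1/m)\min_j \mathrm{ord}_z(\sigma_{j,m})$. Consequently, for each fixed $m$ the auxiliary set
$$E_{m,c} := \bigl\{z \in \Omega : \nu(\varphi_m, z) \geq c - \tfrac{n}{m}\bigr\} = \bigl\{z\in\Omega : \mathrm{ord}_z(\sigma_{j,m}) \geq mc - n \text{ for all } j\bigr\}$$
is an analytic subset of $\Omega$, being the simultaneous zero locus of finitely many Taylor coefficients of each section $\sigma_{j,m}$.

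Finally, the sandwich inequality above gives the identity
$$\{z \in \Omega : \nu(\varphi, z) \geq c\} \;=\; \bigcap_{m \geq 1} E_{m,c},$$
which exhibits the super-level set as an intersection of analytic sets and hence itself analytic, the Noetherian property of the local rings $\O_{\Omega,z}$ ensuring that such an intersection is locally cut out by finitely many holomorphic functions. The technically demanding part of this program is the Bergman-kernel comparison, and within it the lower bound $\varphi - C_1/m \leq \varphi_m$: the correct linear dependence on $1/m$ is precisely what permits the approximation of Lelong numbers with vanishing slack, and obtaining it hinges on an $L^2$-extension (or existence) theorem with explicit weight control. Everything else in the proof is essentially bookkeeping once this estimate is in hand.
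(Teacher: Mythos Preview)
Your approach via Demailly's Bergman-kernel approximation is correct and is, in fact, explicitly acknowledged in the paper (see the first Remark following the theorem at the end of Section~\ref{approx_section}) as yielding ``a very simple proof of Siu's analyticity theorem for the classical Lelong number.'' The paper, however, does not give a standalone proof of this classical statement: it is quoted in the introduction as a known result due to Siu, and the paper's own contribution is the generalized analyticity theorem (Theorem~\ref{analyticity_thm}) for the weighted Lelong numbers $\nu_{z,\psi}$.

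That generalized proof proceeds differently from yours. Rather than approximating $\varphi$ by a sequence $\varphi_m$ with analytic singularities and reading off Lelong numbers directly from vanishing orders, the paper follows Kiselman's idea of \emph{attenuating singularities}: it forms the single plurisubharmonic function $\Psi(z) = \log B_z^{\psi}(z)$, where $B_z^{\psi}$ is the Bergman function for the weight $e^{-2\varphi(\cdot)-2\psi(\cdot - z)}$ (so the weight moves with the evaluation point), and shows via Proposition~\ref{kernellemma} that the \emph{classical} Lelong number of $\Psi$ is positive precisely where $\nu_{z,\psi}(\varphi)$ exceeds a threshold. It then invokes Skoda's inequality to pass to the integrability index of $\Psi$, whose super-level sets are already known to be analytic, and realizes the desired set as an intersection of such analytic sets. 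Your route is more direct for the classical case---no detour through integrability indices, and the analyticity of the approximants' level sets is immediate from the vanishing-order description---but it does not obviously extend to the weighted setting, since for $\psi\neq 0$ it is unclear whether the functions $\Psi_a^m$ retain analytic singularities; the paper raises exactly this as an open question in the same Remark.
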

In fact both the relative type and Demailly's generalized Lelong number defined above, enjoy similar
analyticity properties, provided that $e^{2 \psi}$ is Hölder continuous.
A natural question arises: does an equivalent statement to Siu's analyticity theorem hold for Berndtsson's generalized Lelong number? More precisely,
are the sets $$ \{ z \in \Omega : \nu_{z,\psi}( \varphi) \geq \tau  \}  $$
analytic for $\tau>0$? In the case where the weight $e^{2 \psi}$ is Hölder continuous the affirmative answer is the content of theorem $\ref{analyticity_thm}$.
The main idea of the proof is due to Kiselman (cf.(\cite{Kiselman})) and consists in his technique of "attenuating the singularities of $\varphi$". However, this is here done in a different
manner than in \cite{Kiselman}, following results from \cite{Berndtsson}.
Attenuating the singularities means that we construct a plurisubharmonic function $\Psi$ satisfying the following properties: if the generalized Lelong number
of $\varphi$ is large then its classical Lelong number is positive, and
if the generalized Lelong number of $\varphi$ is small then its classical Lelong number vanishes. 
Using this function we can then realize the set $\{ z \in \Omega : \nu_{z,\psi}( \varphi ) \geq \tau  \}$ as an intersection of analytic sets, which by basic properties of analytic sets is analytic. 
\\

\textbf{Acknowledgements:}{
I would like thank my advisor Bo Berndtsson for introducing me to the topic of this article, for his great knowledge and inspiration, and for his continuous support.
}
\section{Properties and examples} \label{properties_and__examples}

We begin with listing some properties which the generalized Lelong number satisfies.
\bl
Let $\varphi, \varphi' \in PSH( \Omega)$, and assume that $\psi$ satisfies (\ref{integrabilitycondition2}) . Then 
\begin{enumerate}
	\item  For $c>0$ , $\nu_{a,\psi}(c \varphi ) = c \nu_{a,\psi}(\varphi) $.
	\item  {If $\varphi \leq \varphi'$ on some neighbourhood of $a \in \Omega$, then $\nu_{a,\psi}( \varphi )  \geq \nu_{a,\psi}(\varphi' )$.}
	\item { $\nu_{a,\psi}(\varphi + \varphi') \leq \nu_{a,\psi}(\varphi) +\nu_{a,\psi}(\varphi') $.}
	\item { $\nu_{a,\psi}(\max(\varphi, \varphi')) \geq \min(\nu_{a,\psi}({\varphi}), \nu_{a,\psi}(\varphi') )$.}
	\item { Assume that $\nu_{a,0}( \varphi ) \leq \sigma_a(\varphi, \psi):=\sigma$, 
	where $\sigma_a(,)$ denotes the relative type as defined by (\ref{relativetypedef}).
	Then, $$\nu_{a,\psi}(\varphi) \leq \frac{\nu_{a,0}(\varphi)}{1-\frac{\nu_{a,0}(\varphi)}{\sigma}}.$$
	If $\nu_{a,0}( \varphi ) > \sigma_a(\varphi, \psi)$
	then $\nu_{a, \psi}(\varphi)=0.$
	}
	\end{enumerate}
\el

\begin{proof}
The first properties, (1) and (2), are immediate consequences of the definition. \\
(3): Let $s_0>\nu_{a,\psi}(\varphi)$ and $s'_0 > \nu_{a,\psi}(\varphi')$. 
Put$$p=\frac{s_0+s'_0}{s_0}, q = \frac{s_0+s'_0}{s'_0} $$
so that $ \frac{1}{p} + \frac{1}{q}=1$. Hölder's inequality with respect to the finite measure $ e^{-2\psi} d \lambda $ on
some neighbourhood $U$ of $a$, gives us:
$$
\int_{U} e^{ - 2 \frac{\varphi + \varphi' }{s_0  + s'_0} - 2 \psi ( \zeta - a )  } \leq
\left(   \int_{U} e^{ - 2 \frac{\varphi}{s_0} - 2 \psi ( \zeta - a )  } \right)^{1/p}
\left(   \int_{U} e^{ - 2 \frac{\varphi'}{s'_0} - 2 \psi ( \zeta - a )  } \right)^{1/q} < +\infty
$$
due to our choice of $s_0$ and $s'_0$. Thus $\nu_{a,\psi}(\varphi + \varphi') \leq s_0 + s'_0$. Since
we can choose $s_0$ and $s'_0$ arbitrarily close to $\nu_{a,\psi}(\varphi)$ and $\nu_{a,\psi}(\varphi')$ respectively, we are done. \\
(4): Let $s_0<\nu_{a,\psi}(\varphi)$ and $s'_0 < \nu_{a,\psi}(\varphi')$, and let $s=\min(s_0,s'_0)$. Then
$$
\int_{U} e^{ - 2 \frac{\max(\varphi,\varphi') }{s} - 2 \psi ( \zeta - a )  } = \infty.
$$
Thus  $\nu_{a,\psi}(\max(\varphi, \varphi')) \geq s$. The statement follows.\\
(5):
In \cite{Rashkovskii} it is deduced that $$ \varphi(z) \leq \sigma_a(\varphi, \psi) \psi(z) + O(1),$$
as $z \rightarrow a$ ( cf. (\ref{liminfcond})).
Thus, if we choose $r>0$ small enough, and $ \sigma \leq \nu_{a,0}( \varphi )$,
$$
\int_{B(a,r)} e^{ - 2 \frac{\varphi(\zeta)}{s} - 2 \psi ( \zeta - a )  } 
\leq
C \int_{B(a,r)} e^{ - 2 \frac{\varphi(\zeta)}{s} - 2\frac{\varphi(\zeta)}{\sigma} }
$$
which is finite if (remember that $\nu_{a,0}$ denotes the integrability index)
$$\frac{1}{s} + \frac{1}{\sigma} < \frac{1}{\nu_{a,0}(\varphi)} 
\Leftrightarrow 
s > \frac{\nu_{a,0}(\varphi)}{1-\frac{\nu_{a,0}(\varphi)}{\sigma}} .
$$
Thus we obtain:
$$
\nu(\varphi,\psi) \leq \frac{\nu_{a,0}(\varphi)}{1-\frac{\nu_{a,0}(\varphi)}{ \sigma}}.
$$
On the other hand, it is evident that if $\sigma > \nu_{a,0}( \varphi )$ then the integral above will always
be infinite, whence $\nu(\varphi,\psi)=0.$
\end{proof}

We proceed by listing properties the special case $\nu_{z,t \psi}$ satisfies:

\bl \label{concavity_lemma}
For $\varphi, \psi$ plurisubharmonic we have:
\begin{enumerate}
\item{ 
The function $$ t \mapsto \frac{1}{\nu_{z,t \psi}(\varphi)} $$ is concave
while $$ t \mapsto {\nu_{z,t \psi}(\varphi)} $$ is convex.
}
\item{
If $\psi $ is such that $e^{-2n\psi} \notin L_{Loc}^1(0)$ then 
the function
$$ t \mapsto (n-t) \nu_{z,t \psi}(\varphi)$$
is decreasing.
}
\item{
The following inequalities hold:
\be \label{generalized_skoda_inequality}
 \nu_{z,0}(\varphi) \leq \nu_{z,n-1}(\varphi) \leq (n-t) \nu_{z,t} (\varphi) \leq n \nu_{z,0}(\varphi).
\ee
}
\end{enumerate}
\el
\begin{proof}
(1):
Assume $z=0$ and put $$f(t) = 1/ \nu_{0,t}( \varphi) = \sup\{s>0 : e^{-s 2\varphi -2t \psi} \in L_{Loc}^1(0) \}.$$ 
By the definition of concavity, we need to show that
for every $a,b \in [0,n)$ and $\lambda \in (0,1)$ the inequality
$$ f(\lambda a + (1 - \lambda)b) \geq \lambda f(a) + (1 -\lambda)f(b)$$
holds.
Applying Hölder's inequality once again, with $p=1/\lambda, q=1/(1-\lambda)$, we see that
$$
\int_0 e^{-2 (\lambda f(a) + (1-\lambda) f(b))  \varphi -2(\lambda a + (1-\lambda) b) \psi}  \leq
\big( \int_0 e^{-2 f(a) \varphi -2 a  \psi} \big)^{\lambda}
\big(\int_0 e^{-2 f(b) \varphi -2 b \psi} \big)^{1-\lambda},
$$
which implies that 
$ f(\lambda a + (1 - \lambda)b) \geq  \lambda f(a) + (1-\lambda) f(b)$ .
Thus $f$ is a concave function of $t$. 
The exact same calculations with $f(t) =  \nu_{0,t \psi}( \varphi)$ give convexity of
$t \mapsto {\nu_{z,t \psi}(\varphi)}$. Note however that this statement is weaker than saying that 
$t \mapsto \frac{1}{\nu_{z,t \psi}(\varphi)}$ is concave. \\
(2): One can show that if $f(t) \geq 0$ is a concave function with $f(0)=0$, then $t \mapsto f/t$ is decreasing.
Since $ t \mapsto 1/\nu_{0,(n-t)\psi}(\varphi)$ is concave by property ($1$) and is equal to $0$ for $t=0$ by the condition on $\psi$, 
we see that $$ \frac{1}{t\nu_{0,(n-t)\psi}(\varphi)}$$
is decreasing. This implies that $t \mapsto (n-t) \nu_{0,t \psi}(\varphi) $ is a decreasing function. \\
(3): If we accept Skoda's inequality ($\ref{skoda_ineq}$), the only new information is the inequality
$$ \nu_{0,n-1}(\varphi) \leq (n-t)\nu_{0,t}(\varphi)  \leq n \nu_{0,0}(\varphi) $$
which follows immediately from property ($2$) with $\psi=\log| \cdot |$, that is, the fact that  $t \mapsto (n-t)\nu_{0,t}(\varphi)$ is decreasing in $t$.
\end{proof}

\begin{remark}
The proof of property (1) in Lemma \ref{concavity_lemma} can easily be adapted to show that something
stronger holds:
the function $$ \psi \mapsto \frac{1}{\nu_{z,\psi}(\varphi)}$$ is concave on the set of plurisubharmonic functions $\psi$.

\end{remark}

We proceed by calculating two special cases of the generalized Lelong number, which will give us some insight to what it measures.
\begin{ex} \label{example1}
We calculate $ \nu_{0,t}(\varphi)$ where $ \varphi(z_1,..z_n)=\frac{1}{2}\log (z_1\bar{z_1}+...z_k \bar{z_k})$ where $k$ lies between 1 and $n$. 
Thus we want to decide for which $s>0$ the following integral goes from being finite into being infinite:
$$ \int_{\Delta} \frac{d \lambda}{|z_1\bar{z_1}+...z_k \bar{z_k}|^{1/s} |z|^{2t}},$$
where $\Delta$ is some arbitrarily small polydisc containing the origin.
In this integral we put $z'' = (z_{k+1},..z_n)$ and introduce polar coordinates with respect
to $z'=(z_1,..z_k)$ to obtain that it is equal to
\begin{equation} \label{specint1} 
C \int_{\Delta ''} \int_0^1 \frac{R^{2k-1-2/s} }{|R^2 + |z''|^2 |^{t}}dR d \lambda(z''),
\end{equation}
were $C$ is some contant depending only on the dimension.
This integral is easily seen to be finite if and only if $ 2k -2/s> 0$ and $ 2k -2t-2/s >2k-2n$. In other words $$\nu_{0,t}(\log (z_1\bar{z_1}+...z_k \bar{z_k})) = \max(\frac{1}{k},\frac{1}{n-t}) .$$

\end{ex} 

This example shows that when we look at sets of the type $\{ z_1=...=z_k=0 \}$ in $\mathbb{C}^n$ the generalized Lelong
number, as a function of $t$, thus senses the (co-)dimension of the set: it is constant, and equal to the integrability index of $\frac{1}{2}\log (z_1\bar{z_1}+...z_k \bar{z_k})$,
when $t$ is so small so that $n-t$ is larger than $k$ - the co-dimension of the set - and then grows linearly to $1$, which is the Lelong number of $\frac{1}{2}\log (z_1\bar{z_1}+...z_k \bar{z_k})$.

\begin{ex} \label{example2}
Next we compute $\nu_{0,t}(\varphi)$ for $ \varphi(z_1,..z_n)=\log (z_1^{\alpha_1} \cdots z_k^{\alpha_1})$ for $1\leq k \leq n$ and $(\alpha_1,..,\alpha_k) \in \ \mathbb{N}^k$, $\alpha_i \neq 0$.
Thus we want to study the following integrals behaviour with respect to $s$:
$$ \int_{\Delta} \frac{d \lambda}{|z_1^{\alpha_1} \cdots z_k^{\alpha_k}|^{2/s} |z|^{2t}}.$$
Using Fubini's thorem and changing to polar coordinates in each of the variables $z_1$ to $z_k$ we
obtain:
$$ \int_{ \Delta''} \int_{\mathbb{R}^k \cap U} \frac{r_1^{1-2 \alpha_1 /s} \cdots r_k^{1-2 \alpha_k /s } d r_1 \cdots d r_k }{ |r_1^2 + ... + r_k^2 + |z''|^2|^{t}}d \lambda(z'')$$
for some open set $U$ in $\mathbb{R}^k$.
We put $ N = \sum \alpha_i$ and in the inner integral we change to polar coordinates in $\mathbb{R}^k$ and obtain an integral of the same magnitude:
$$ \int_{ \Delta''} \int_{0}^r \frac{R^{2k-1-2 N/s} }{|R^2 + |z''|^2|^{t}}dR   d \lambda(z'') \cdot \int_{S^{k-1}} \omega_1^{1-2 \alpha_1 /s} \cdots \omega_k^{1-2 \alpha_k /s} d \sigma( \omega ),$$
for some $r>0$.
In the previous example saw that the transition into being infinite for the first integral  (with $s$ replaced by $s/N$) was obtained for $s=\max(\frac{N}{k},\frac{N}{n-t})$.
The second integral can be computed using gamma functions, and in fact equals
$$  \frac{\prod_{i=1}^k  \Gamma(\frac{ 2 - 2 \alpha_i /s}{2})}{\Gamma(\frac{ |\alpha| +k}{2})} ;$$
hence the integral diverges for $ 1-2 \alpha_i /s=-m$ for $m>0$ (if the other factors are non-zero). However, the condition on $s$ becomes $ s = \frac{\max{\alpha_i}}{1+m} \leq \frac{N}{k} $,
which will not give any contribution to $s=\max(\frac{N}{k},\frac{N}{n-t})$.
Putting it together we see that:
$$ \nu_{0,t}(\log (z_1^{\alpha_1} \cdots z_k^{\alpha_1})) = \max(\frac{\sum \alpha_i}{k},\frac{\sum \alpha_i}{n-t}).$$
\end{ex}
When we are looking on sets of the form $\{ z_1 \cdots z_k = 0\}$, which is the union of the $k$ coordinate planes
$\{ z_k=0 \}$ (corresponding to the function $\varphi=\log|z_1 \cdots z_k | $), the generalized
Lelong number thus senses how many coordinate planes the union is taken over. 
\begin{remark}
These two examples show us that the two leftmost inequalities in (\ref{generalized_skoda_inequality}) are sharp.
More precisely: using $\varphi$ from example \ref{example1}, we see that if $t=n-k$, $$ \nu_{0,n-1}(\varphi) \leq (n-t) \nu_{0,t}(\varphi) \Leftrightarrow 1 \leq \frac{n-t}{k} \Leftrightarrow 1 \leq 1.$$
However, if $\varphi$ is as in example \ref{example2}, with $t = n-k$ we see that
$$ (n-t)\nu_{0,t}(\varphi) \leq n \nu_{0,0}(\varphi) \Leftrightarrow \sum_i \alpha_i \leq \sum_i \alpha_i.$$

\end{remark}
We now use our generalized Lelong number to obtain a classical result,
namely, that $\nu_{a,t}$ is invariant under biholomorphic coordinate changes.
\begin{prop} \label{biholotheorem}
If $f:\Omega \rightarrow \Omega$ is biholomorphic, $f(0)=0$ and $\det f'(0) \neq 0$, then 
$$ \nu_{0,t} (\varphi \circ f ) = \nu_{0,t} (\varphi ).$$
\end{prop}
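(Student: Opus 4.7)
The plan is to reduce the statement to a change of variables inside the defining integral for $\nu_{0,t}$. Recall that $\nu_{0,t}(\varphi)$ is the infimum of those $s>0$ for which
$$
\int_{U} e^{-2\varphi(\zeta)/s - 2t \log|\zeta|}\, d\lambda(\zeta) < +\infty
$$
on some neighbourhood $U$ of $0$. Applied to $\varphi \circ f$, membership in $L^1_{\rm loc}(0)$ is the condition
$$
\int_{U} e^{-2\varphi(f(\zeta))/s - 2t \log|\zeta|}\, d\lambda(\zeta) < +\infty.
$$
I would then push this integral forward by $w = f(\zeta)$. Since $f$ is biholomorphic in a neighbourhood of $0$, the (real) Jacobian is $|\det_{\C} f'(\zeta)|^2$, which is continuous and bounded away from $0$ near the origin (by the hypothesis $\det f'(0) \neq 0$). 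Thus changing variables transforms the integral into
$$
\int_{f(U)} e^{-2\varphi(w)/s - 2t \log|f^{-1}(w)|}\, |\det_{\C} (f^{-1})'(w)|^2\, d\lambda(w),
$$
and the Jacobian factor affects neither finiteness nor divergence.

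The only remaining point is to compare the weight $|f^{-1}(w)|^{-2t}$ with $|w|^{-2t}$. Here I would use that, because $f(0)=0$ and $f'(0)$ is invertible, the inverse function theorem gives a constant $C>0$ such that
$$
C^{-1}|w| \leq |f^{-1}(w)| \leq C|w|
$$
in a neighbourhood of $0$. Consequently $e^{-2t \log|f^{-1}(w)|}$ and $e^{-2t \log|w|}$ differ multiplicatively by a bounded positive quantity, so the transformed integral is finite if and only if
$$
\int_{f(U)} e^{-2\varphi(w)/s - 2t \log|w|}\, d\lambda(w) < +\infty.
$$
Taking the infimum over admissible $s$ yields $\nu_{0,t}(\varphi \circ f) = \nu_{0,t}(\varphi)$.

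No step looks like a genuine obstacle: everything reduces to standard change of variables combined with the elementary two-sided estimate $|f^{-1}(w)| \asymp |w|$ near the fixed point. The only mild care to take is that the neighbourhood $U$ should be small enough that $f$ is a diffeomorphism there and the comparison between $|f^{-1}(w)|$ and $|w|$ is valid; both are guaranteed by shrinking $U$ once, using $\det f'(0) \neq 0$.
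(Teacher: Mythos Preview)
Your proposal is correct and follows essentially the same approach as the paper: change variables by $w=f(\zeta)$, observe that the Jacobian factor is bounded above and below near the origin, and use the two-sided estimate $|f^{-1}(w)|\asymp|w|$ to see that the weight $|f^{-1}(w)|^{-2t}$ is comparable to $|w|^{-2t}$. If anything, your version is slightly more careful in writing the real Jacobian as $|\det_{\C}(f^{-1})'(w)|^{2}$.
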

\begin{proof}

By a change of coordinates we get
$$ \int_0 e^{- \frac{2}{s}\varphi \circ f(\zeta) - 2t\log |\zeta|} d \lambda(\zeta) = 
\int_0 e^{- \frac{2}{s}\varphi (z) - 2t\log |f^{-1}(z)|} |\det f'(z)|^{-1} d \lambda(z).$$
Around the origin we have that $|f^{-1} (z) |$ is comparable to $|z|$ and thus the last integral is of the same magnitude as
$$\int_0 e^{- \frac{2}{s}\varphi (z) - 2t\log |z|} \frac{1}{|\det f'(z)|} d \lambda(z).$$
Since $C > \frac{1}{|\det f'(\cdot)|} > c>0$ in some neighbourhood of the origin, the first integral is infinite
iff  $$\int_0 e^{- \frac{2}{s}\varphi (\zeta) - 2t\log |\zeta|}  d \lambda(\zeta)=\infty.$$
In other words, $$ \nu_{0,t} (\varphi \circ f ) = \nu_{0,t} (\varphi ).$$ 
\end{proof}
Since for $t=n-1$ we get the classical Lelong number, we obtain as a corollary the theorem of Siu found in \cite{Siu}:
\begin{cor}
The classical Lelong number is invariant under biholomorphic changes of coordinates.
\end{cor}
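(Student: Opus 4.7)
The plan is to reduce the corollary to Proposition \ref{biholotheorem} combined with the identification, supplied by Theorem \ref{berndtssons_sats}, of the classical Lelong number $\nu_a(\varphi)$ with $\nu_{a,n-1}(\varphi)$. Specializing the proposition to $t = n-1$ gives, for any biholomorphism $f$ fixing the origin with $\det f'(0) \neq 0$,
\[
\nu_0(\varphi \circ f) \;=\; \nu_{0,n-1}(\varphi \circ f) \;=\; \nu_{0,n-1}(\varphi) \;=\; \nu_0(\varphi),
\]
which is exactly the corollary in the special case $a = f(a) = 0$.

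The remaining work is a routine translation reduction to this origin-centered case. Given a biholomorphism $g$ defined near a point $a$ with $g(a) = b$, define $f(z) := g(z + a) - b$, so that $f(0) = 0$, $\det f'(0) = \det g'(a) \neq 0$, and $(\varphi \circ g)(z + a) = (\varphi(\cdot + b)) \circ f(z)$. The classical Lelong number is manifestly translation-equivariant, since each of the equivalent characterizations recalled in the introduction depends only on the germ of $\varphi$ at the base point, measured against $\log|\cdot|$ centered there. Applying the origin case to $f$ and $\varphi(\cdot + b)$ therefore yields $\nu_a(\varphi \circ g) = \nu_b(\varphi)$, which is the desired invariance. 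There is no real obstacle here: all the analytic content has been absorbed into Proposition \ref{biholotheorem} and Theorem \ref{berndtssons_sats}, and the translation step is purely formal.
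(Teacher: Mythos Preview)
Your proof is correct and follows essentially the same approach as the paper: the corollary is deduced immediately from Proposition~\ref{biholotheorem} by specializing to $t=n-1$ and invoking the identification $\nu_a = \nu_{a,n-1}$ from Theorem~\ref{berndtssons_sats}. The paper leaves the passage from the origin-centered statement to an arbitrary base point implicit, whereas you spell out the translation reduction explicitly; this extra care is harmless and arguably an improvement.
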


\begin{ex}
Let $V \subset \Omega$ be a variety and pick a point $x \in V$ where $V$ is smooth. We can then find a neigbourhood $U$ of $x$ and $f_1,...,f_k \in \mathcal{O}(U)$
such that $$V \cap U = \{ f_1=...=f_k=0 \}.$$
Since $V$ was smooth at $x$, we can change coordinates via a function $g:U \rightarrow U$ such that in these new coordinates $$V \cap U = \{ z_1=...z_l=0 \}$$
for some $l \leq k$. This means that $f_i \circ g = z_i $ for $1 \leq i \leq l$ and $f_i \circ g = 0$ for $i \geq l$.
By proposition \ref{biholotheorem} we have that $$ \nu_{x,t} ( \sum_1^k |f_i|^2 ) = \nu_{x,t} ( \sum_1^l |z_i|^2 ), $$
and thus, by example \ref{example1} we see that $\nu_{x,t} ( \sum_1^k |f_i|^2 )$ senses the co-dimension of $V$ at $x$.
\end{ex}

When considering the generalized Lelong number, our next technical lemma shows that we can "move" parts of the singularity
from the plurisubharmonic function to the weight, provided the singularity is sufficiently large:
\begin{lma} \label{singularitylemma}
Let $ \delta > 0 $, and $\psi$  be a plurisubharmonic function such that $e^{-2(1+\tau)\psi} \in L_{Loc}^1(0)$, for some $\tau>0$. 
If $ \nu_{a,\psi} (\varphi) = 1 + \delta$, then with $0<\epsilon<\tau \delta$ we have that 
$$ \int_{a} e^{ - 2 \varphi (\zeta) - 2(1 - \epsilon ) \psi ( \zeta - a )  } = \infty.$$
\end{lma}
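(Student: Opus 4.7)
The plan is to use H\"older's inequality in the reverse direction: we want to show that a certain integral diverges, and we do so by bounding a known-to-be-divergent integral above by a product in which all but one factor is finite.

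By translation we may take $a=0$. Since $\nu_{0,\psi}(\varphi) = 1+\delta$, the definition of the generalized Lelong number tells us that for every $s_0 < 1+\delta$,
\[
 \int_{0} e^{-\frac{2\varphi(\zeta)}{s_0} - 2\psi(\zeta)}\,d\lambda(\zeta) = +\infty.
\]
I would pick $s_0$ cleverly and split the integrand as $f^{1/p}\cdot g^{1/q}$ with $f = e^{-2\varphi - 2(1-\epsilon)\psi}$ and $g = e^{-2(1+\tau)\psi}$, where $\frac{1}{p}+\frac{1}{q}=1$. Matching exponents of $\varphi$ forces $p = s_0$, and matching exponents of $\psi$ forces $\frac{1-\epsilon}{s_0} + \frac{1+\tau}{q} = 1$. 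Together with $\frac{1}{p}+\frac{1}{q}=1$ this system has the unique solution
\[
 s_0 = 1 + \frac{\epsilon}{\tau},\qquad \frac{1}{q}= \frac{s_0 - 1 + \epsilon}{s_0(1+\tau)}.
\]

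The key point is that the hypothesis $\epsilon < \tau\delta$ is \emph{exactly} what guarantees $s_0 = 1 + \epsilon/\tau < 1+\delta$, so the left-hand side of H\"older's inequality
\[
 \int_{0} e^{-\frac{2\varphi}{s_0} - 2\psi}\,d\lambda \;\leq\; \left(\int_{0} e^{-2\varphi - 2(1-\epsilon)\psi}\,d\lambda\right)^{1/p}\left(\int_{0} e^{-2(1+\tau)\psi}\,d\lambda\right)^{1/q}
\]
is infinite (where all integrals are taken over a sufficiently small neighbourhood of $0$). By the integrability assumption \eqref{integrabilitycondition2}, the second factor on the right is finite. Consequently the first factor must be infinite, which is the desired conclusion.

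The only real obstacle is the algebra of finding the right Hölder split; the rest is bookkeeping. The single critical observation is that the two constraints (H\"older and matching the $\psi$-exponent) pin down $s_0$ uniquely, and the strict inequality $\epsilon<\tau\delta$ is sharp precisely to keep this $s_0$ inside the divergence regime $s_0<1+\delta$. If $\psi$ had lower regularity, one might worry about whether the integrals make sense on some common neighbourhood, but here local integrability of $e^{-2(1+\tau)\psi}$ combined with upper-semicontinuity of $\varphi$ lets us work on a fixed small ball around $0$ throughout.
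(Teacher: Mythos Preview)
Your argument is correct and is essentially the same H\"older-inequality trick the paper uses: the paper introduces an auxiliary parameter $\delta'<\delta$, applies H\"older with $p=1+\delta'$ against the measure $e^{-2(1-\epsilon)\psi}d\lambda$, and then checks that the second factor is finite precisely when $\epsilon<\delta'\tau$; you instead solve for the optimal exponent $s_0=1+\epsilon/\tau$ directly, which amounts to taking $\delta'=\epsilon/\tau$ in the paper's setup. The two computations are algebraically equivalent, yours being the slightly tidier bookkeeping.
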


\begin{proof}
The hypothesis implies that for every neighbourhood $U$ of $a$, 
$$ 
\int_U e^{ - \frac{2 \varphi (\zeta)}{1 + \delta'} - 2 \psi ( \zeta - a )  }  = \infty,
$$  
when $\delta'<\delta$. 
The function $\zeta \mapsto  e^{-2(1-\epsilon)\psi (\zeta - a)}$ is locally integrable around $a$,
and we apply Hölder's inequality with respect to the measure 
$e^{-2(1-\epsilon)\psi (\zeta - a)} d\lambda (\zeta)$ on $U$, 
with $p = 1+\delta'$ and $q=\frac{1+\delta'}{\delta'}$, to obtain
\begin{eqnarray*} 
\int_U e^{ - \frac{2 \varphi (\zeta)}{1 + \delta'} -  2\psi (\zeta - a)}d \lambda (\zeta) 
=
\int_U e^{ - \frac{2 \varphi (\zeta)}{1 + \delta'}}  e^{-2(1-\epsilon)\psi (\zeta - a)} e^{- 2 \epsilon \psi (\zeta - a)} d\lambda (\zeta) 
\leq  \\
\leq \Big{(}\int_U e^{ - 2 \varphi (\zeta)}  e^{-2(1-\epsilon)\psi (\zeta - a)} d\lambda (\zeta)\Big{)}^{1/p} 
\Big{(} \int_{U}  e^{-2(\epsilon q + 1 - \epsilon)\psi (\zeta - a)} d\lambda (\zeta)  \Big{)}^{1/q}.
\end{eqnarray*}
Since the left hand side is infinite by hypothesis, 
and the second integral on the right hand side converges (after possibly shrinking $U$,
since $\epsilon q + 1 - \epsilon \leq  1 + \tau$, if $\epsilon < \delta' \tau$ ), we see that
$$ \Big{(}\int_U e^{ - 2 \varphi (\zeta) - 2(1-\epsilon) \psi (\zeta - a)}   d\lambda (\zeta)\Big{)}^{1/p} = \infty.$$
This implies the desired conclusion, since $\delta'$ can be choosen arbitrarily close to $\delta$.  
\end{proof}


We will now give a proof of the Skoda inequality (\ref{skoda_ineq}), based on the Oshawa-Takegoshi extension theorem, 
learned in a private communication
with Mattias Jonsson. We will also use the same technique to give a simple
proof of Theorem \ref{berndtssons_sats}.

We begin by recalling the statement of the \textbf{Oshawa-Takegoshi theorem} (see e.g. \cite{oh-t}):
Assume $V$ is a smooth hypersurface in $\mathbb{C}^m$ which in local coordinates
can be written as $ V = \{ z_n = 0 \}$, and let $U$ be a neighbourhood in $\mathbb{C}^m $ whose intersection 
with $V$ is non-empty. We also assume $\varphi$ is such that $\int_V e^{-2\varphi} < \infty$. Then, if $h_0 \in \mathcal{O}(V \cap U)$, there exists a $h \in \mathcal{O}( U)$
with $h=h_0$ on $V$ which satisfies the following estimate:
\be \label{ohtineq} 
\int_{U} \frac{ |h|^2 e^{-2 \varphi}}{|z_n|^{2-2 \delta}} \leq C_{ \delta} \int_{U \cap V} |h_0|^2 e^{-2 \varphi},
\ee
for $0 < \delta < 1$ and some constant $C_{\delta}$ depending only on $U$, $\delta$ and $\varphi$.  \\

The hard part of Skoda's inequality, and the part we will show, is the implication 
$\nu_{z,n-1}(\varphi)<1 \Rightarrow \nu_{z,0}(\varphi)<1.$ 
We record the core of the argument as a lemma.

\begin{lma} \label{jonssonlma}
Let $\varphi \in PSH(\Omega)$ and let $x \in \Omega$. Assume there exists a complex line $L$ through $x$
for which 
$$ \int_{L \cap \Omega} e^{-2 \varphi} < \infty,$$
then there exists a neighbourhood $\omega \subset \Omega$ of $x$, for which
$$  \int_{\omega} e^{-2 \varphi} < \infty.$$
\end{lma}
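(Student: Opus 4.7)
The plan is to use the Ohsawa--Takegoshi extension theorem iteratively to produce a holomorphic function $h$ defined on a neighbourhood $U$ of $x$ with $h(x)\neq 0$ and $|h|^{2}e^{-2\varphi}$ integrable on $U$. Once such an $h$ is at hand the lemma follows at once: on a smaller neighbourhood $\omega$ of $x$ where $|h|\geq 1/2$ one has $e^{-2\varphi}\leq 4|h|^{2}e^{-2\varphi}$, so $\int_{\omega} e^{-2\varphi}<\infty$.

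For the set-up I would choose holomorphic coordinates centred at $x$ so that $L=\{z_{2}=\cdots=z_{n}=0\}$, and introduce the nested chain of linear subspaces
\[
V_{k} := \{z_{k+1}=\cdots=z_{n}=0\},\qquad k=1,\ldots,n,
\]
so that $L=V_{1}\subset V_{2}\subset\cdots\subset V_{n}=\C^{n}$. The key observation is that each $V_{k}$ sits inside $V_{k+1}$ as the smooth hypersurface $\{z_{k+1}=0\}$, which is exactly the geometric situation in which (\ref{ohtineq}) applies.

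The iteration then proceeds as follows. Start with $h_{0}\equiv 1$ on $V_{1}=L$, so that the hypothesis of the lemma gives $\int_{L\cap U}|h_{0}|^{2} e^{-2\varphi}<\infty$ for any bounded neighbourhood $U$ of $x$. Inductively, suppose we have built $h_{k-1}\in\mathcal{O}(V_{k}\cap U)$ satisfying $\int_{V_{k}\cap U}|h_{k-1}|^{2} e^{-2\varphi}<\infty$. Applying (\ref{ohtineq}) inside the ambient space $V_{k+1}$, with hypersurface $V_{k}$, plurisubharmonic weight $\varphi|_{V_{k+1}}$, and boundary datum $h_{k-1}$, we obtain $h_{k}\in\mathcal{O}(V_{k+1}\cap U)$ extending $h_{k-1}$ and satisfying
\[
\int_{V_{k+1}\cap U}\frac{|h_{k}|^{2} e^{-2\varphi}}{|z_{k+1}|^{2-2\delta}}\;\leq\; C_{\delta}\int_{V_{k}\cap U}|h_{k-1}|^{2} e^{-2\varphi}\;<\;+\infty.
\]
Having fixed $U$ at the outset so that $|z_{k+1}|\leq 1$ throughout for every $k$, the factor $|z_{k+1}|^{-(2-2\delta)}$ is bounded below by $1$ and may be dropped, yielding $\int_{V_{k+1}\cap U}|h_{k}|^{2} e^{-2\varphi}<\infty$ and closing the induction. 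After $n-1$ steps we obtain $h:=h_{n-1}\in\mathcal{O}(U)$ with $h(x)=1$ and $|h|^{2} e^{-2\varphi}$ integrable on $U$, which by the reduction above proves the lemma.

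The main point requiring verification is that $\varphi|_{V_{k+1}}$ is a genuine plurisubharmonic function at every step, i.e.\ that $\varphi$ is not identically $-\infty$ on $V_{k+1}$. This is automatic: every $V_{k+1}$ contains $L$, and the hypothesis $\int_{L} e^{-2\varphi}<\infty$ prevents $\varphi|_{L}$ from being identically $-\infty$, hence the same is true of $\varphi|_{V_{k+1}}$. The real content of the argument is packaged inside (\ref{ohtineq}): it propagates $L^{2}$-control of the data with respect to $e^{-2\varphi}$ from a codimension-one slice to its ambient space, and iterating it $n-1$ times bridges the $n-1$ codimensions separating the line $L$ from the full ambient $\C^{n}$.
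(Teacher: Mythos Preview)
Your proof is correct and follows essentially the same approach as the paper: both extend the constant function $1$ from $L$ to the ambient space by iterated application of Ohsawa--Takegoshi, discard the denominator in (\ref{ohtineq}), and then use that the resulting holomorphic function is nonvanishing near $x$. You have simply spelled out the induction in more detail and added the (useful) check that $\varphi$ is not identically $-\infty$ on the intermediate subspaces.
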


That is, in order to prove that $e^{-2 \varphi}$ is locally integrable at a point, we need only to find a complex line where the statement holds.

\begin{proof}
It suffices, of course, to prove this for $x=0$. 
Assume $L$ is a complex line through the origin for which $\int_{L \cap \Omega} e^{-2 \varphi|_L} < \infty$.
Applying the Ohsawa-Takegoshi extension theorem inductively, we can extend the function $1 \in L^2(L \cap \Omega, e^{- \varphi|_L}) \bigcap \mathcal{O} (L \cap \Omega) $ to a function
$ h \in L^2(\Omega,e^{- \varphi}) \bigcap \mathcal{O} (\Omega) $, where $\Omega$ is a neighbourhood in $\mathbb{C}^n$,
with a bound on the $L^2$ norm: 
$$ \int_{\Omega} |h|^2 e^{-2 \varphi } \leq C \int_{\Omega \bigcap L} e^{-2 \varphi |_L} < + \infty$$
for some constant $C$. This inequality is obtained from (\ref{ohtineq}) by just discarding the denominator figuring in
the left-hand-side integral.
Since $h$ is equal to 1 on $L$, the quantity $|h|^2$ is comparable
to 1 in a neighbourhood $\omega$ of the origin. Thus we obtain:
$$ \int_{\omega} e^{-2 \varphi } < + \infty$$
which is what we aimed for. 
\end{proof} 
\textit{Proof of Skoda's inequality:}

Remember, we want to prove the implication $\nu_{z,n-1}(\varphi)<1 \Rightarrow \nu_{z,0}(\varphi)<1$.
To that effect, assume $ \nu_{z,n-1}(\varphi) < 1.$ It is well known that
the Lelong number (at the origin) of a function $\varphi$ is equal to the Lelong number of the same function
restricted to a generic complex line passing through the origin (we will prove this later, see lemma \ref{linelemma}), which coincides with the integrability index
on that line. Thus we can find a complex line $L$ for which $\int_{\Omega \bigcap L} e^{-2 \varphi |_L} < + \infty$ and so by 
lemma (\ref{jonssonlma}) we see that $\nu_{z,0}(\varphi)<1 $. \\ \\

One might hope that knowledge of the dimension of the set where $\nu_{z,n-1}(\varphi) \geq c$ would enable
us to sharpen the estimate of Skoda's inequality. The following example shows that unfortunately this information is not sufficient to succeed.

\begin{ex}
Let $n=2$ and $\varphi(z_1,z_2)= \log( | z_1 |^2  + |z_2|^{2a})$. Then
one calculates:
\begin{itemize}
	\item $\nu_{0,n-1}( \varphi) = 2$,
	\item $\nu_{0,0}( \varphi) = \frac{2}{1+1/a}$,
	\item  $\{z: \nu_{z,n-1}( \varphi) \geq 1 \}=\{0 \}$.
\end{itemize}
This is the best scenario possible: the dimension of the upper-level set of the Lelong number is 0
and \textit{still} the lower bound of the Skoda inequality is sharp, which one realizes by letting $a \rightarrow \infty$.
\end{ex}

Let us see how we can apply the full strength of the estimate (\ref{ohtineq}) of the Ohsawa-Takegoshi theorem to obtain a proof of Theorem \ref{berndtssons_sats}. We recall the statement of Theorem \ref{berndtssons_sats}:
$$ \nu_{a} (\varphi) \geq 1 \Longleftrightarrow \int_a e^{-2 \varphi(\zeta) - 2(n-1)\log |\zeta - a|} d \lambda(\zeta) = + \infty.$$
Assume $a=0$, and let $\Omega$ be a small neighbourhood of the origin in $\mathbb{C}^n$, and let $\varphi \in PSH(\Omega)$ satisfy $\nu_0(\varphi)<1 $. Then we know that the restriction of $e^{-2\varphi}$ to a generic complex line is integrable. However, since a rotation of $\varphi$ will not effect $\varphi$'s integrability properties in $\Cn$, we may assume that the line is given by $\{z_2=...=z_n=0 \}$. In fact
we can assume that $\varphi$ is integrable along every coordinate axis. Thus $\varphi$ satisfies
\be \label{gen_cond}
\int_{ \{z_2=...=z_n=0 \} \cap \Omega} e^{-2 \varphi}d \lambda_1 < + \infty .
\ee
We want to prove that $$ \int_0 e^{-2 \varphi(\zeta) - 2(n-1)\log |\zeta|} d \lambda(\zeta) < +\infty.$$
If we consider the constant function 1 as an element of $\mathcal{O} (\mathbb{C} \cap \Omega )$ then, by the argument above, we obtain a function
$h \in \mathcal{O} (\mathbb{C}^2 \cap \Omega)$, comparable to 1 in $\Omega$, thus aquiring the following inequality:
$$ \int_{\mathbb{C}^2 \cap \Omega'} \frac{ e^{-2 \varphi}}{|z_1|^{2-2 \delta}} \leq C_{ \delta} \int_{\mathbb{C} \cap \Omega} e^{-2 \varphi}< + \infty,$$
with $\Omega' \subset \Omega$.
Since $0 < \delta < 1$ the function
$ \varphi + (1- \delta) \log |z_1| $ is plurisubharmonic in $\Omega$. Thus we can repeat the argument with $\varphi$
exchanged for $\varphi + (1- \delta) \log |z_1| $ to obtain: 
$$ \int_{\mathbb{C}^3 \cap \Omega''} \frac{ e^{-2 \varphi}}{|z_1|^{2-2 \delta}|z_2|^{2-2 \delta}} \leq C_{ \delta} \int_{\mathbb{C}^2 \cap \Omega} \frac{ e^{-2 \varphi}}{|z_1|^{2-2 \delta}} < + \infty,$$
with $\Omega'' \subset \Omega'. $
Iterating this procedure it is easy to realise that, after possibly shrinking $\Omega$, we obtain the inequality 
\be \label{skodaintegral}
\int_{ \Omega} \frac{ e^{-2 \varphi}}{|z_1|^{2-2 \delta}...|z_{n-1}|^{2-2 \delta}} \leq C_{ \delta} \int_{\mathbb{C}^{n-1} \cap \Omega} \frac{ e^{-2 \varphi}}{|z_1|^{2-2 \delta}...|z_{n-2}|^{2-2 \delta}} < + \infty.
\ee
Using the trivial estimate
$$ \int_{ \Omega} \frac{e^{-2 \varphi}}{|z|^{2(n-1)(1 -\delta)}} \leq \int_{ \Omega} \frac{ e^{-2 \varphi}}{|z_1|^{2-2 \delta}...|z_{n-1}|^{2-2 \delta}}$$
we see that
$$ \int_{ \Omega} e^{-2 \varphi -2(n-1)(1 -\delta)\log|z|} \leq C_{\delta}< + \infty, \, \, \, \, \, \, \forall \delta>0,$$ 
which, by lemma \ref{singularitylemma} with $\psi(\zeta)=(n-1) \log|\zeta|$, implies that  
$$ \int_{ \Omega} e^{-2 \frac{\varphi}{1+\delta(n-1)} -2(n-1)\log|z|} \leq C_{\delta}< + \infty, \, \, \, \, \, \, \forall \delta>0.$$ 
In this argument, since $\nu_0(\varphi)<1$, we can exchange $\varphi$ for $\frac{\varphi}{1-r} $, where $r>0$, and still have $\nu_0(\frac{\varphi}{1-r})<1$.
Thus, the hypothesis implies that $\nu_{0,n-1} (\varphi) < 1 $. \\The other direction is simpler: 
by introducing polar coordinates we see that  
$$ \int_a e^{-2 \varphi(\zeta) - 2(n-1)\log |\zeta - a|} d \lambda(\zeta) = C \int_{\omega \in S^{2n-2}} \int_{t \in \mathbb{C}, |t|<1} e^{-2 \varphi(a + t\omega)}.$$ If $ \nu_a(\varphi) \geq 1$ then the integral of $\varphi$ over almost every complex line through $a$ is infinite, and thus the above integral is infinite which is the same as saying $\nu_{0,n-1} (\varphi) \geq 1$.
We have proved Theorem \ref{berndtssons_sats}. \\ \\

We will now describe the relation between the generalized Lelong number $ \nu_{a,k}$ and restrictions to 
linear subspaces of dimension $k$. In order to do this, we will have to recall the natural measure
on the Grassmannian induced by the Haar measure on $U(n)$,
where $U(n)$ denotes the unitary group of $\mathbb{C}^n$. Also, let $\vartheta$ denote the unique, unit Haar measure
on $U(n)$. Then we can define a measure $d\mu$ on the Grassmannian $G(k,n)$ - the set of $k-$dimensional subspaces of
$\Cn$ - by setting for some fixed $V \in G(k,n)$, and $A \subset G(k,n)$  the mass of $A$ to be $\mu(A)= \vartheta ( M \in U(n) : MV \in A ).$ 
This means that if $ P : U(n) \rightarrow G(k,n)$ is the function $P(M) = MV$, then $$\mu = P_{*} (\vartheta) .$$
The measure $\mu$ is easily seen to be invariant under actions of $U(n)$, that is, $\mu(MA)= \mu(A)$ if $M \in U(n)$,
and also to be independent of our choice of $V$.
For $f$ a function defined on $G(k,n)$, we have
$$ 
\int_{T \in G(k,n)} f(T) d \mu = \int_{ M \in U(n)} f(MV) d \vartheta.
$$
We deduce that, for $g \in C_c^{\infty}(\Cn)$, 
$$ 
\int_{T \in G(k,n)} \int_{T} g(z) d \lambda_k d \mu =
\int_{ M \in U(n)} \int_{ MV } g(z) d \lambda_k d \vartheta 
$$
where $\lambda_k$ is the $k-$dimensional Lebesgue measure.
After changing to polar coordinates the above integral becomes 
$$ \int_0^{\infty}  \int_{M \in U(n)} \int_{ S_{MV}^{2k-1} } \rho^{2k-1} g( \rho \omega) dS(\omega) d \vartheta d \rho ,$$
where $S_{MV}^{2k-1}$ denotes the $(2k-1)-$dimensional sphere in the $k-$dimensional subspace defined by $MV$.
Consider the linear functional on $C(\rho S^{2n-1})$, defined by 
\be \label{functionaldef}
I_\rho(g) := \rho^{2n-1}\int_{M \in U(n)} \int_{S_{MV}^{2k-1} } g( \rho \omega ) dS(\omega) d \vartheta .
\ee
Notice that, even though this functional is defined for functions 
on $S^{2n-1}$ while the integration takes place on the sphere $S^{2k-1}$,
it is invariant under rotations on the sphere $S^{2n-1}$.

By the Riesz representation theorem, this functional is given by integration against a measure $d \gamma$ on $\rho S^{2n-1}$, i.e.,
\be \label{functionaldef2}
I_\rho(g)= \int_{\rho S^{2n-1}} g(\omega) d \gamma (\omega),
\ee
where, since $I_\rho$ is rotational invariant, the measure $d \gamma $ is rotational invariant as well. 
Thus $d \gamma$ is equal to the surface measure on
$\rho S^{2n-1}$ multiplied with a constant $c(\rho)$. This constant is easily determined
by evaluating $I_{\rho}(1)$ using the two expressions (\ref{functionaldef}),(\ref{functionaldef2}) above (remember that $\vartheta$ was normalized so that $\vartheta(U(n))=1$) :
$$\rho^{2n-1} c(\rho) = I_{\rho}(1) = \rho^{2n-1} \int_{S_{MV}^{2k-1} } dS(\omega). $$
So $c(\rho)=c_k=\int_{S^{2k-1} } dS(\omega)$ and is therefore independent of $\rho$.

Thus we see that the integral
$$ 
\int_{T \in G(k,n)} \int_{ T} g(z) d \lambda_k d \mu
=
\int_0^{\infty} \rho^{2(k-n)} I_\rho(g)   d \rho
$$
is equal to
$$
c_k \int_0^{\infty} \rho^{2(k-n)} \int_{\rho S^{2n-1}} g(\omega) d S (\omega) d \rho
=
c_k \int_{\Cn} |z|^{2(k-n)} g(z) d \lambda_n
$$

Exchanging $ g(z)$ for $ g(z) |z|^{2(n-k)} $ we have proven the following formula, which generalizes
the formula for changing to polar coordinates in an integral:
\begin{lma} \label{grassmanmeasurelemma}
For $g$ an integrable function,
$$\int_{\Cn} g d \lambda_n = c_k^{-1}\int_{T \in G(k,n)} \int_{ T} |z|^{2(n-k)} g(z) d \lambda_k d \mu .$$
\end{lma}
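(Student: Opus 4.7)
The proof is essentially laid out in the computation preceding the lemma statement, so the plan is to assemble those ingredients into a clean argument and then verify that the substitution $g(z) \mapsto g(z)|z|^{2(n-k)}$ produces the claimed formula.

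First I would fix $V \in G(k,n)$, unravel $d\mu = P_*(\vartheta)$, and write
\[
\int_{T \in G(k,n)} \int_T g(z)\, d\lambda_k\, d\mu(T)
=\int_{M \in U(n)} \int_{MV} g(z)\, d\lambda_k\, d\vartheta(M).
\]
For each fixed $M$, polar coordinates on the $k$-dimensional complex subspace $MV$ give $\int_{MV} g\, d\lambda_k = \int_0^\infty \rho^{2k-1} \int_{S^{2k-1}_{MV}} g(\rho\omega)\, dS(\omega)\, d\rho$. Interchanging the order of integration against $d\vartheta$ and $d\rho$ leads naturally to the functional $I_\rho$ defined in \eqref{functionaldef}.

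The core step is the identification \eqref{functionaldef2}. Here I would argue that $I_\rho$ is a positive linear functional on $C(\rho S^{2n-1})$ which, by the $U(n)$-invariance of the Haar measure $\vartheta$, is invariant under rotations of the ambient sphere $\rho S^{2n-1}$. Riesz representation then exhibits $I_\rho$ as integration against some positive Borel measure $d\gamma$ on $\rho S^{2n-1}$, and rotational invariance forces $d\gamma = c(\rho)\, dS_{\rho S^{2n-1}}$. Testing on $g \equiv 1$ determines $c(\rho) = c_k := \int_{S^{2k-1}} dS(\omega)$, independent of $\rho$. This is the one genuinely non-routine point: one must take seriously that $I_\rho$ acts on functions over the full $(2n-1)$-sphere even though the underlying integration takes place on lower-dimensional subspheres.

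Combining everything yields
\[
\int_{T \in G(k,n)}\!\! \int_T g\, d\lambda_k\, d\mu
= \int_0^\infty \rho^{2(k-n)} I_\rho(g)\, d\rho
= c_k \int_0^\infty \rho^{2(k-n)} \!\!\int_{\rho S^{2n-1}} g\, dS\, d\rho.
\]
Recognising the right-hand side as $c_k \int_{\Cn} |z|^{2(k-n)} g(z)\, d\lambda_n$ via standard polar coordinates on $\Cn$, and finally replacing $g(z)$ by $g(z)|z|^{2(n-k)}$ to cancel the weight, gives
\[
\int_{\Cn} g\, d\lambda_n = c_k^{-1} \int_{T \in G(k,n)} \int_T |z|^{2(n-k)} g(z)\, d\lambda_k\, d\mu,
\]
which is the desired formula. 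Density arguments extend this from $g \in C_c^\infty(\Cn)$ to general integrable $g$.
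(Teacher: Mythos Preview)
Your proposal is correct and follows essentially the same approach as the paper: you reproduce the computation preceding the lemma (pushforward to $U(n)$, polar coordinates in each $k$-plane, rotational invariance of $I_\rho$ via Riesz, evaluation of the constant on $g\equiv 1$, and the substitution $g \mapsto g|z|^{2(n-k)}$), and then invoke density to pass from $C_c^\infty(\Cn)$ to integrable $g$, exactly as the paper's proof does.
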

\begin{proof}
We have proven the formula under the condition that $g \in C_c^{\infty}(\Cn)$. The general case follows by approximating
an arbitrary integrable function $g$ by functions in $C_c^{\infty}(\Cn)$ .
\end{proof}

Of course, a similar formula holds if we instead consider $k-$planes through some arbitrary point $a \in \Cn$,
and in the above discussion assume the spheres to be centered around the point $a$.
This remark applies to the following result as well:

\begin{prop}
Let $k$ be an integer between 0 and $n-1$. Then $\nu_{0,n-k} (\varphi)<1$ 
$\Longleftrightarrow $
$ \nu_{0,0}(\varphi|_T)<1$ for almost every $T \in G(k,n)$
$\Longleftrightarrow$
$ \nu_{0,0}(\varphi|_T)<1$ for some $T \in G(k,n)$.
\end{prop}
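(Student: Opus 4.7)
The plan is to combine two ingredients: for the implication $\nu_{0,n-k}(\varphi) < 1 \Rightarrow \nu_{0,0}(\varphi|_T) < 1$ for $\mu$-almost every $T$, I would slice $\Cn$ by the Grassmann measure formula of Lemma \ref{grassmanmeasurelemma}; for the converse ``some $T \Rightarrow \nu_{0,n-k}(\varphi) < 1$,'' I would transport local integrability on $T$ out to a full neighbourhood of $0$ in $\Cn$ by iterating the Ohsawa--Takegoshi extension theorem exactly as in the proof of Theorem \ref{berndtssons_sats}. The middle implication ``a.e.\ $T \Rightarrow$ some $T$'' is immediate since the Grassmann measure $\mu$ is not the zero measure.

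For the forward direction I would pick $s \in (\nu_{0,n-k}(\varphi), 1)$, so that $\int e^{-2\varphi/s - 2(n-k)\log|\zeta|}\,d\lambda < +\infty$. Applying Lemma \ref{grassmanmeasurelemma} to the function $g(z) = e^{-2\varphi(z)/s}\,|z|^{-2(n-k)}$, the factor $|z|^{2(n-k)}$ appearing in the Grassmann formula cancels $|z|^{-2(n-k)}$ in the inner integral, yielding
\[
c_k^{-1} \int_{T \in G(k,n)} \int_T e^{-2\varphi|_T/s}\,d\lambda_k\,d\mu(T) < +\infty.
\]
Hence for $\mu$-a.e.\ $T$ one has $\int_T e^{-2\varphi|_T/s}\,d\lambda_k < +\infty$, which means $\nu_{0,0}(\varphi|_T) \leq s < 1$.

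For the backward direction, suppose $\nu_{0,0}(\varphi|_T) < 1$ for some $T$. Exploiting the strict inequality, I would choose $r > 0$ small with $\nu_{0,0}(\varphi|_T/(1-r)) < 1$, i.e.\ $\int_{T \cap \Omega} e^{-2\varphi|_T/(1-r)}\,d\lambda_k < +\infty$. Proposition \ref{biholotheorem} lets me rotate so that $T = \{z_{k+1} = \cdots = z_n = 0\}$. I would then iterate the Ohsawa--Takegoshi theorem $n-k$ times with plurisubharmonic weight $\varphi/(1-r)$, starting from the constant function $1$ on $T \cap \Omega$ and extending through the coordinates $z_{k+1}, \ldots, z_n$ one at a time, updating the weight by $(1-\delta)\log|z_j|$ at each step exactly as in the proof of Theorem \ref{berndtssons_sats}. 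The result is
\[
\int_{\Omega'} \frac{e^{-2\varphi/(1-r)}}{|z_{k+1}|^{2-2\delta}\cdots|z_n|^{2-2\delta}}\,d\lambda < +\infty
\]
for every $0 < \delta < 1$ on a small neighbourhood $\Omega'$ of $0$. The crude bound $|z_j| \leq |z|$ gives $\int_{\Omega'} e^{-2\varphi/(1-r) - 2(n-k)(1-\delta)\log|z|}\,d\lambda < +\infty$, and sending $\delta \to 0^+$ in Lemma \ref{singularitylemma} with weight $\psi = (n-k)\log|\zeta|$ (whose $e^{-2(1+\tau)\psi}$ is locally integrable for any $\tau \in (0, k/(n-k))$) yields $\nu_{0,n-k}(\varphi/(1-r)) \leq 1$. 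Homogeneity of $\nu_{0,n-k}$ then gives $\nu_{0,n-k}(\varphi) \leq 1 - r < 1$.

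The main technical hurdle is the careful bookkeeping of strict versus non-strict inequalities: both the Grassmann formula and Lemma \ref{singularitylemma} only produce non-strict bounds, so the strict slack in each hypothesis must be absorbed into an auxiliary parameter ($s < 1$ in the forward direction, $r > 0$ in the backward direction) before the main integral estimate is applied, which is what converts the conclusion back into a strict inequality.
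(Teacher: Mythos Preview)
Your proof is correct and follows essentially the same route as the paper: the Grassmann slicing formula (Lemma~\ref{grassmanmeasurelemma}) for the forward implication and the iterated Ohsawa--Takegoshi argument from the proof of Theorem~\ref{berndtssons_sats} for the converse, with the same use of Lemma~\ref{singularitylemma} and the auxiliary slack parameter to convert non-strict bounds back into strict ones. Your writeup is in fact more detailed than the paper's, which simply refers back to ``the exact same argument involved in proving the Skoda inequality.''
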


\begin{proof}
The assumption $\nu_{0,n-k} (\varphi)<1$ means that 
$$\int_{B(0,r)} e^{ - 2 \frac{\varphi (\zeta)}{1 - \delta} - 2 (n-k) \log( \left| \zeta  \right| )  } d \lambda_n < +\infty$$
for some $r>0$, and $\delta>0$ small.
Using lemma \ref{grassmanmeasurelemma} this integral equals
$$ \int_{T \in G(k,n)} \int_{ B(0,r) \cap T} e^{ - 2 \frac{\varphi (\zeta)}{1 - \delta}} d \lambda_k d \mu.$$
Thus $\int_{ B(0,r) \cap T} e^{ - 2 \frac{\varphi (\zeta)}{1 - \delta}} d \lambda_k $ 
must be finite for almost every $T$ (since
by the lemma, $d \mu$ is a multiple of the Lebesgue measure),
which implies that $ \nu_{0,0}(\varphi|_T)<1$ for almost every $T \in G(k,n)$. This, of course,
implies that $ \nu_{0,0}(\varphi|_T)<1$ for some $T \in G(k,n)$.
On the other hand, 
if $\int_{ B(0,r') \cap T} e^{ - 2 \frac{\varphi (\zeta)}{1 - \delta}} d \lambda_k < +\infty$ for some $T$ and $\delta>0$,
then the exact same argument involved in proving the Skoda inequality 
(using the Ohsawa-Takegoshi theorem), shows that in fact
$$\int_{ B(0,r) } e^{ - 2 \frac{\varphi (\zeta)}{1 - \delta'} -2(n-k)\log|z|} d \lambda_n < +\infty,$$
for some small $\delta'>0$. This implies that $\nu_{0,n-k}(\varphi)<1.$
\end{proof}

A similar argument gives us the following classical statement (cf. \cite{Siu}):

\begin{thm}
For a generic $V \in G(k,n)$ we have that $$\nu_{0,n-1}(\varphi) = \nu_{0,k-1}(\varphi |_V).$$ 
\end{thm}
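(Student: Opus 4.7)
The strategy is to establish two inequalities: $\nu_{0,k-1}(\varphi|_V) \leq \nu_{0,n-1}(\varphi)$ for almost every $V \in G(k,n)$ (easy direction, by Fubini on the Grassmannian) and $\nu_{0,n-1}(\varphi) \leq \nu_{0,k-1}(\varphi|_V)$ for every $V$ (hard direction, by iterated Ohsawa-Takegoshi).

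For the easy direction I apply Lemma \ref{grassmanmeasurelemma} with the choice $g(z) = e^{-2\varphi(z)/s}|z|^{-2(n-1)}$, and note that $2(n-k) - 2(n-1) = -2(k-1)$, yielding
\[
\int_{\mathbb{C}^n} e^{-2\varphi/s - 2(n-1)\log|z|}\, d\lambda_n
= c_k^{-1} \int_{G(k,n)} \int_T e^{-2\varphi|_T/s - 2(k-1)\log|z|}\, d\lambda_k\, d\mu(T).
\]
For any $s > \nu_{0,n-1}(\varphi)$ the left-hand side is finite on a neighbourhood of $0$, so by Fubini the inner integral is finite for $\mu$-a.e. $T$, which gives $\nu_{0,k-1}(\varphi|_T) \leq s$ a.e. Letting $s \searrow \nu_{0,n-1}(\varphi)$ proves the inequality.

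For the hard direction, given $T \in G(k,n)$ with $\nu_{0,k-1}(\varphi|_T) < s$, I run the same iterated Ohsawa--Takegoshi construction used in the proof of Theorem \ref{berndtssons_sats}, but starting from a $k$-plane rather than a complex line. Choose coordinates so that $T = \{z_{k+1} = \cdots = z_n = 0\}$, set $\Phi = \varphi/s + (k-1)\log|z|$ (psh, with $\int_T e^{-2\Phi|_T} d\lambda_k < \infty$), and apply Ohsawa--Takegoshi successively $n-k$ times to extend the constant $1$ one coordinate at a time. After each step the new psh weight absorbs a factor $(1-\delta')\log|z_j|$ (for $j = k+1,\ldots,n$) coming from the Ohsawa--Takegoshi denominator, and the resulting extended holomorphic function is comparable to $1$ near the origin. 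The final estimate reads
\[
\int_\Omega e^{-2\varphi/s - 2(k-1)\log|z| - 2(1-\delta')\log|z_{k+1}\cdots z_n|}\, d\lambda_n < \infty.
\]
Using $|z_{k+1}\cdots z_n| \leq |z|^{n-k}$ near $0$, this yields $\int_\Omega e^{-2\varphi/s - 2[n-1-(n-k)\delta']\log|z|} d\lambda_n < \infty$ for all small $\delta' > 0$. Writing $n-1-(n-k)\delta' = (1-\epsilon)(n-1)$ with $\epsilon = (n-k)\delta'/(n-1) > 0$ and invoking Lemma \ref{singularitylemma} with weight $\psi = (n-1)\log|z|$ forces $\nu_{0,n-1}(\varphi)/s \leq 1$, i.e.\ $\nu_{0,n-1}(\varphi) \leq s$. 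Letting $s \searrow \nu_{0,k-1}(\varphi|_T)$ concludes the inequality.

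The main obstacle is the second direction: one must carefully bookkeep which plurisubharmonic weight is fed into Ohsawa--Takegoshi at each iteration, so that after $n-k$ steps the accumulated logarithmic singularities along the normal coordinates combine (via $|z_{k+1}\cdots z_n| \leq |z|^{n-k}$) into exactly the radial singularity of strength $(n-1)$ required by the classical Lelong number. The loss of $(n-k)\delta'$ in the exponent of $\log|z|$ is unavoidable at each finite step and is only recovered by invoking Lemma \ref{singularitylemma}, which was designed exactly to absorb such losses. Combining the two inequalities gives $\nu_{0,k-1}(\varphi|_V) = \nu_{0,n-1}(\varphi)$ for $\mu$-a.e., hence generic, $V \in G(k,n)$.
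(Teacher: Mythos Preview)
Your proof is correct and follows essentially the same approach as the paper. The paper's own proof is very terse—it carries out the Fubini/Grassmannian direction exactly as you do, and for the reverse inequality merely says ``The other direction is proved by using the same Ohsawa--Takegoshi argument as before,'' referring back to the proof of Theorem~\ref{berndtssons_sats}; you have correctly filled in those details, including the absorption of the $(n-k)\delta'$ loss via Lemma~\ref{singularitylemma}.
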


\begin{proof}
Assume $\nu_{0,n-1}(\varphi) < 1$. Then by Lemma \ref{grassmanmeasurelemma},
with $g(\zeta)=\exp(-2\varphi( \zeta) - 2(n-1)\log|\zeta| )$ we get that for some small $\delta>0$,
$$ 
+ \infty >
\int_{B(0,r)} e^{ - 2 \frac{\varphi (\zeta)}{1 - \delta} - 2 (n-1) \log \left| \zeta \right|   } d \lambda_n = 
\int_{V \in G(k,n)} \int_{ B(0,r) \cap V} e^{ - 2 \frac{\varphi (\zeta)}{1 - \delta} - 2 (k-1) \log \left| \zeta \right| } d \lambda_k d \mu. 
$$  
Thus $\nu_{0}(\varphi |_V) < 1$ for a generic $V \in G(k,n)$. The other direction is proved by 
using the same Ohsawa-Takegoshi argument as before.
\end{proof}

Taking $k=1$ we obtain again the classical result:

\begin{thm} \label{linelemma}
For almost every complex line $L$ through a point $a$, $$\nu_{a}(\varphi) = \nu_{a}(\varphi|_L).$$
\end{thm}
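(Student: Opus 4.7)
The plan is to derive this statement as the $k=1$ specialization of the theorem immediately preceding it. Applying that theorem with $k=1$ to a point $a$ (which, as the author notes, requires nothing more than recentering the spheres in Lemma \ref{grassmanmeasurelemma} around $a$ instead of the origin) yields that for a generic complex line $L$ through $a$,
\[
\nu_{a,n-1}(\varphi) \;=\; \nu_{a,0}(\varphi|_L).
\]
So the only remaining task is to reinterpret both sides as the classical Lelong numbers appearing in the statement.

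For the left-hand side, Theorem \ref{berndtssons_sats} identifies $\nu_{a,n-1}(\varphi)$ with the classical Lelong number $\nu_a(\varphi)$ in the ambient space $\C^n$. For the right-hand side, observe that $\varphi|_L$ is a plurisubharmonic function on a one-dimensional complex manifold, and by the remark following the definition of $\nu_{a,t}$ (the case $n=1$ of Theorem \ref{berndtssons_sats}), the Lelong number and the integrability index of a plurisubharmonic function of one complex variable coincide. Thus $\nu_{a,0}(\varphi|_L) = \nu_a(\varphi|_L)$, where the right-hand side denotes the classical Lelong number of $\varphi|_L$ at $a$ computed intrinsically on the line $L$.

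Combining these two identifications with the equality produced by the previous theorem gives
\[
\nu_a(\varphi) \;=\; \nu_{a,n-1}(\varphi) \;=\; \nu_{a,0}(\varphi|_L) \;=\; \nu_a(\varphi|_L)
\]
for a generic, hence almost every, $L \in G(1,n)$ through $a$, which is the desired conclusion. No substantial obstacle arises, since all three equalities have already been established; the only point to verify carefully is that the $G(k,n)$-generic condition of the previous theorem is genuinely ``almost every'' with respect to the measure $d\mu$ produced by Lemma \ref{grassmanmeasurelemma}, but this is built into the construction of $d\mu$ from the Haar measure on $U(n)$.
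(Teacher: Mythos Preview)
Your proposal is correct and matches the paper's own argument: the paper simply states ``Taking $k=1$ we obtain again the classical result,'' and your write-up spells out the two identifications $\nu_{a,n-1}(\varphi)=\nu_a(\varphi)$ and $\nu_{a,0}(\varphi|_L)=\nu_a(\varphi|_L)$ that make this specialization work.
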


That is, the Lelong number coincides with what it generically is on complex lines. Moreover, $\nu_{n-k}$
coincides with the integrability index of $\varphi$ restricted a generic $k-$plane. \\ \\

\section{The analyticity property of the upper level sets of the generalized Lelong number} \label{analyticity_section}
In this section we prove that the upper level sets of our generalized Lelong number are analytic,
provided that the weight is "good enough". This we accomplish by
considering the Bergman function, whose definition we will soon recall. 
First, however, we begin with discussing what properties
the weight need to satisfy in order to be "good enough".\\ \\

$\bullet$ We say that a plurisubharmonic function $\psi$
has an isolated singularity at the origin 
if there exists an $M>0$ such that $$ \psi(z) \geq M \log|z|,$$
for $z$ close to $0$. 
It might be worth mentioning that in the case of analytic singularities, i.e., if
$\psi = \log|f|$ where $f=(f_1,..,f_n)$
is a tuple of holomorphic functions with common intersection locus at the a single point,
the least of all $M$ for which 
$$ \log|f(z)| \geq M \log|z|$$
is called the Lojasiewicz exponent of $f$. \\

$\bullet$ We assume as before that $$e^{-2(1+\tau)\psi} \in L_{Loc}^1(0),$$ for some $\tau>0$.\\

$\bullet$ We also assume that $e^{2 \psi}$ is Hölder continuous with exponent $\alpha$, at $0$. \\

$\bullet$ Finally we assume that $\nu_0(\psi) = l>0,$ so that $\psi$ carries some singularity at the origin.

\begin{df}
We say that a plurisubharmonic function $\psi$ is an admissible weight, 
and write $\psi \in W(\tau,l,M,\alpha)$ if it satisfies the four properties above.
\end{df} 

Admissible weights satisfy the following property, which we will make use of in the proof
of the analyticity:

\begin{lma} \label{annulilemma}
Assume that $e^{2\psi}$ is Hölder continuous at the origin, with exponent $\alpha$ 
and satisfies $\psi \geq M \log|z|$ near the origin.
Then there exists a $R>0$ and a constant $C>0$, such that for every $k\in \mathbb{N}$:
$$ 
e^{ -2 \psi(\zeta - a')} \geq C e^{ -2 \psi(\zeta)}
$$
where $ 2^{-(k-1)} \leq |\zeta | \leq 2^{-k}  $ and $|a'|^=2^{-Rk}$. 
\end{lma}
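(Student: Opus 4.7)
The plan is to compare $e^{2\psi(\zeta)}$ and $e^{2\psi(\zeta-a')}$ additively using the Hölder hypothesis, and then convert the additive comparison into a multiplicative one by using the Lojasiewicz-type lower bound $\psi \geq M \log|z|$ to bound the additive error by a constant multiple of $e^{2\psi(\zeta)}$ itself.

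Concretely, I would first invoke Hölder continuity of $e^{2\psi}$ at the origin to get a constant $C_H > 0$ and a small neighborhood $U$ of $0$ on which
$$\bigl|e^{2\psi(\zeta)} - e^{2\psi(\zeta-a')}\bigr| \leq C_H\,|a'|^{\alpha}, \qquad \zeta,\,\zeta-a' \in U.$$
Next, from the isolated singularity assumption $\psi(z) \geq M\log|z|$ near $0$ I would extract the lower bound $e^{2\psi(\zeta)} \geq |\zeta|^{2M}$. For $\zeta$ in the annulus of radius $\sim 2^{-k}$ (the statement has the inequalities on $|\zeta|$ inverted, but one way or the other $|\zeta| \geq 2^{-k}$), this gives
$$e^{2\psi(\zeta)} \geq 2^{-2Mk}.$$

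Now I would choose the parameter $R$ to balance these two bounds. Taking $R \geq 2M/\alpha$ and $|a'| = 2^{-Rk}$, one gets
$$C_H\,|a'|^{\alpha} = C_H\,2^{-R\alpha k} \leq C_H\,2^{-2Mk} \leq C_H\,e^{2\psi(\zeta)}.$$
Plugging this into the Hölder estimate yields
$$e^{2\psi(\zeta-a')} \leq e^{2\psi(\zeta)} + C_H\,|a'|^{\alpha} \leq (1+C_H)\,e^{2\psi(\zeta)},$$
and taking reciprocals gives $e^{-2\psi(\zeta-a')} \geq (1+C_H)^{-1} e^{-2\psi(\zeta)}$, which is the claim with $C = (1+C_H)^{-1}$.

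The only delicate point is that the Hölder and singularity hypotheses are only guaranteed on some neighborhood of $0$, so the argument is valid only once $k$ exceeds some threshold $k_0$ (making both $\zeta$ and $\zeta-a'$ lie in $U$). This is harmless: it amounts to choosing the ambient radius $R_0 = 2^{-k_0}$ mentioned in the statement, and the finitely many earlier values of $k$ can be absorbed by shrinking $C$. Thus there is no real obstacle; the only substantive step is the choice $R = 2M/\alpha$, which quantitatively couples the Hölder exponent of the weight to its Lojasiewicz-type exponent at the singular point.
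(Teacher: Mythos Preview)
Your argument is correct and is essentially identical to the paper's proof: both use the H\"older estimate to bound $e^{2\psi(\zeta-a')}-e^{2\psi(\zeta)}$ by $C_H|a'|^\alpha$, invoke $e^{2\psi(\zeta)}\geq |\zeta|^{2M}\geq 2^{-2Mk}$, and choose $R$ with $R\alpha\geq 2M$ so that the additive error is dominated by $e^{2\psi(\zeta)}$. Your version is in fact a bit more careful than the paper's, since you make explicit the H\"older constant, the threshold $k_0$, and the (harmless) reversal of the annulus inequalities.
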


\begin{proof}
Fix $k\in \mathbb{N}$.
We want to show that $$  e^{ -2 \psi(\zeta-a')} \geq Ce^{ -2 \psi(\zeta)} $$
for $ 2^{-(k-1)} \leq |\zeta | \leq 2^{-k}  $ and $|a'|^=2^{-Rk}$. 
Since $|\zeta|^R \geq c |a'|$ the assumption gives us that 
$$e^{2 \psi(\zeta)} \geq |\zeta|^{2 M} \geq c |a'|^{ \alpha}$$ if $R \alpha \geq 2 M$ (which is a condition independent of $k$).
Now, using the Hölder continuity, we obtain 
$$ e^{ 2 \psi(\zeta - a')} \leq e^{ 2 \psi(\zeta)} + |a'|^{\alpha} \leq c e^{ 2 \psi(\zeta)}$$
which implies 
$$ e^{ -2 \psi(\zeta - a')} \geq C e^{ -2 \psi(\zeta)}, $$
for some contant $C>0$.

\end{proof}


\begin{ex} 
Examples of plurisubharmonic functions which are admissible weights are given by
$$ \psi = \log\left( \sum_{i=1}^n |f_i|^{\alpha_i} \right)$$
where $f=(f_1,...,f_n)$ is an $n-$tuple of holomorphic functions with common zero locus at the origin.
Here we have to assume that $\alpha_i>0$ are as small as needed in order for a $\tau>0$ to exist for which 
$$e^{-2(1+\tau)\psi} \in L_{Loc}^1(0).$$
Then $e^\psi$ is Hölder continuous with Hölder exponent $\min(1,\alpha_i)$, and $\psi$ have Lelong number equal to
$\min{\alpha_i \nu_0(\log|f_i|)}.$ 
Also, the Lojasiewicz exponent, which is the smallest $M$ for which $ \sum_{i=1}^n |f_i|^{\alpha_i} \geq |z|^M$ 
is easily seen to be finite.
\end{ex}

We now define the Bergman kernel with respect to a weight.
\begin{df}
Let $a \in \Omega$, $\varphi \in PSH(\Omega)$ and $\psi \in W(\tau,l,M, \alpha)$. We define
$\mathcal{H}_a = \mathcal{O}(\Omega) \cap L^2 ( \Omega, e^{-2\varphi( \cdot) - 2 \psi( \cdot - a )} ),$
which is a separable Hilbert space. The Bergman kernel for a point $z \in \Omega$ is defined
as the unique function $B^{\psi}_a(\zeta,z)$, holomorphic in $\zeta$, satisfying
$$ 
h(z) = \int_{\Omega} h( \zeta ) \overline{B^{\psi}_a(\zeta,z)} e^{-2\varphi( \zeta) - 2 \psi( \zeta - a )} d \lambda(\zeta),
$$
for every $h \in \mathcal{H}_a$.
\end{df}
The existence of the Bergman kernel is a (rather easy) consequence of the Riesz representation theorem for Hilbert spaces.
Closely related to the Bergman kernel is the Bergman function:

\begin{df}
For $a\in \Omega$ the Bergman function at a point $\zeta \in \Omega$ is defined as 
$$ B^{\psi}_a(\zeta) :=  B^{\psi}_a(\zeta,\zeta).$$ 
\end{df}
We define
$$\Lambda(a) = \{ h \in \mathcal{O}(\Omega) , \int_{\Omega} \left| h( \zeta ) \right| ^2 
e^{ - 2 \varphi (\zeta) - 2\psi( \zeta-a  )} d \lambda ( 	\zeta) \leq 1  \}$$
that is, those functions in $\mathcal{H}_a$ of norm less than or equal to 1.
Let us calculate the norm of $\zeta \mapsto B^{\psi}_a(\zeta,z)$:
$$
\left\| B^{\psi}_a(\cdot,z) \right\|^2 =  
\int_{\Omega} B^{\psi}_a(\zeta,z) \overline{B^{\psi}_a(\zeta,z)} e^{-2\varphi( \zeta) - 2 \psi( \zeta - a )} d \lambda(\zeta)
= B^{\psi}_a(z,z),
$$
which in particular implies that $ B^{\psi}_a(z,z)$ is given by a non-negative real number.
Consequently $$s(\zeta) = \frac{B^{\psi}_a(\zeta,z)}{ \sqrt{B^{\psi}_a(z,z)}} \in \Lambda(z)$$ and 
so 
\be \label{bmanrealization}
|s(z)|^2 = B^{\psi}_a(z).
\ee
\\
Also, we have that
\be
\sup_{h \in \Lambda(a)} |h(z)|^2 = \sup_{h \in \Lambda(a)} |(h,B^{\psi}_a(\cdot,z))|^2 = \left\| B^{\psi}_a(\cdot,z) \right\|^2 =
 B^{\psi}_a(z),
\ee
where $(,)$ denotes the inner product in $\mathcal{H}_a$,
which gives us the following extremely useful characterization of the Bergman function:
\be \label{bmanchar}
B_a^{\psi}(z) := \sup \{ |h(z)|^2 : h \in \mathcal{O}(\Omega) , \int_{\Omega} \left| h( \zeta ) \right| ^2 
e^{ - 2 \varphi (\zeta) - 2\psi( \zeta-a  )} d \lambda ( 	\zeta) \leq 1  \} ,
\ee
and (\ref{bmanrealization}) means that this supremum is actually realized by the function $s$. \\

Bergman functions enjoy several nice properties, and one, critical for our purposes, is provided by the following
theorem of Berndtsson (cf. \cite{Berndtsson}), 

\begin{thm}
If $\Omega$ is pseudoconvex, then the function $(a,z) \mapsto \log B_a^{\psi}(z)$, is plurisubharmonic in $(a,z)$.
\end{thm}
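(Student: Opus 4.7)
The plan is to deduce this from the main result of \cite{Berndtsson} on the plurisubharmonic variation of weighted Bergman kernels.

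The first step is to identify the effective weight. Put
\[
\Phi(a,\zeta) := 2\varphi(\zeta) + 2\psi(\zeta-a).
\]
Then $\Phi$ is plurisubharmonic on $\Cn\times\Omega$: the summand $2\varphi(\zeta)$ is psh in $\zeta$ and constant in $a$, hence psh in the pair $(a,\zeta)$; and $2\psi(\zeta-a)$ is the pullback of the psh function $\psi$ by the holomorphic map $(a,\zeta)\mapsto\zeta-a$, so it is psh as well. By the extremal characterization (\ref{bmanchar}) together with (\ref{bmanrealization}), the Bergman function $B^\psi_a(z)$ agrees with the diagonal value $K_a(z,z)$ of the reproducing kernel of the weighted space $\mathcal{O}(\Omega)\cap L^2(\Omega, e^{-\Phi(a,\cdot)})$.

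The second step is a direct appeal to Berndtsson's theorem in its sharp form: if $\Omega$ is pseudoconvex and $\Phi$ is a psh weight on $U\times\Omega$, then $(a,z)\mapsto \log K_a(z,z)$ is plurisubharmonic on $U\times\Omega$. Applied with $U=\Cn$ to the weight above, this gives precisely the conclusion of the theorem.

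The main obstacle, which is the content of Berndtsson's theorem rather than of our reformulation, is the \emph{joint} plurisubharmonicity in $(a,z)$. Plurisubharmonicity in $z$ alone is immediate from (\ref{bmanchar}), since $\log B^\psi_a(z)$ is then the upper semicontinuous envelope of the psh functions $z\mapsto\log|h(z)|^2$ as $h$ runs over the unit ball of $\H_a$. The joint statement instead requires, for any complex affine line $s\mapsto(a(s),z(s))$, the production of a holomorphic family $s\mapsto h_s\in\mathcal{O}(\Omega)$ that interpolates an extremal function $h_0$ at $s=0$ and whose fibrewise $\Phi(a(s),\cdot)$-norms satisfy $\le 1+o(s)$. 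Such a family is produced in \cite{Berndtsson} by a H\"ormander-type $L^2$-extension argument over the line, which is exactly where the pseudoconvexity hypothesis on $\Omega$ enters. Once $h$ is in hand, $s\mapsto h_s(z(s))$ is holomorphic in $s$, so $\log|h_s(z(s))|^2$ is subharmonic, and via (\ref{bmanchar}) the sub-mean-value inequality transfers to $s\mapsto\log B^\psi_{a(s)}(z(s))$ at $s=0$. Since this holds for every complex line through $(a(0),z(0))$, plurisubharmonicity of $(a,z)\mapsto\log B^\psi_a(z)$ follows.
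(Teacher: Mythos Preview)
Your proposal is correct and matches the paper's treatment: the paper does not prove this statement at all but simply attributes it to Berndtsson with a citation to \cite{Berndtsson}. Your observation that the joint weight $\Phi(a,\zeta)=2\varphi(\zeta)+2\psi(\zeta-a)$ is plurisubharmonic in $(a,\zeta)$, together with the direct appeal to Berndtsson's theorem, is exactly the intended reduction; the additional sketch you give of how Berndtsson's argument proceeds is more than the paper itself provides.
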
 

Thus we can talk about the Lelong number of the function $ z \mapsto \log B_z^{\psi}(z)$ in $\Omega$,
and the following proposition relates it to the generalized Lelong number of $\varphi$.
More specifically, it says that if the generalized Lelong number of $\varphi$ with respect to $\psi$ is larger than 1, then the classical Lelong number
of $ z \mapsto \log B_z^{\psi}(z)$ is larger than 0, and if the generalized Lelong number is smaller than 1 the
classical number is 0.  In the terminology of Kiselman, we say that $\log B_z^{\psi}(z)$ attenuates the singularities of $\varphi$.
\\ \\
Recall that by lemma (\ref{annulilemma}) we can find a $R>0$ 
for which
$$ e^{ -2 \psi(\zeta-a')} \geq C e^{ -2 \psi(\zeta)}$$
for $ 2^{-(k-1)} \leq |\zeta | \leq 2^{-k}  $ and $|a'|^=2^{-Rk}$, for every $k \in \mathbb{N}$. \\ \\
Also, by lemma (\ref{singularitylemma}) we can choose an $\epsilon < \delta \tau$ (arbitrarily close to $\delta \tau$)  for which
 \be \label{epsilonn}
 \int_{B(a,1/2^N)} e^{ - 2 \varphi (\zeta) - 2(1 -   \epsilon ) \psi( \zeta-a)  }= \infty,
 \ee
if we fix an $N>0$ large enough.
\begin{prop} \label{kernellemma}
Let $\delta>0$ be small and let $\Omega$ be an open and pseudoconvex set containing the point $a$,
and let $\psi \in W(\tau,l,M,\alpha)$. Assume $$\nu_{a,\psi} (\varphi)=1+\delta.$$
Then, with $\cdelta = \delta \tau l $, 
the classical Lelong number of $\log  B_{\cdot}^{\psi}(\cdot)$ at
$a$ is larger than or equal to $\cdelta/R$ , that is:
$$ \nu_a(\log B_{\cdot}^{\psi}(\cdot)) \geq \frac{\cdelta}{R}. $$
On the other hand, if we assume that
$$\nu_{a,\psi} (\varphi)<1,$$
then
$$ \nu_a(\log B_{\cdot}^{\psi}(\cdot)) = 0. $$
\end{prop}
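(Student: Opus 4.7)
The strategy is to translate the Lelong number assertion $\nu_a(\log B_\cdot^\psi(\cdot)) \geq \gamma$ into the pointwise estimate $B_{a'}^\psi(a') \leq C|a'-a|^{\gamma}$ for $a'$ near $a$, and then exploit the supremum characterization (\ref{bmanchar}) to reduce this to uniform $L^2$ bounds on holomorphic functions. Writing $|a'-a| = 2^{-Rk}$ allows us to apply Lemma \ref{annulilemma} on the dyadic annulus $A_k = \{2^{-k}\leq |\zeta - a|\leq 2^{-(k-1)}\}$, on which the weight comparison $e^{-2\psi(\cdot-a')} \geq C\, e^{-2\psi(\cdot-a)}$ is available.

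For the first assertion, assume $\nu_{a,\psi}(\varphi) = 1+\delta$. I would show that every $h \in \mathcal{O}(\Omega)$ with $\|h\|_{a'}^2 := \int_\Omega |h|^2 e^{-2\varphi - 2\psi(\cdot - a')}\, d\lambda \leq 1$ satisfies $|h(a')|^2 \leq C\, 2^{-k C_\delta}$. Applying Lemma \ref{annulilemma} on $A_k$ gives $\|h\|_{a'}^2 \geq C\int_{A_k} |h|^2 e^{-2\varphi - 2\psi(\cdot - a)}\, d\lambda$, so it suffices to show $\int_{A_k} |h|^2 e^{-2\varphi-2\psi(\cdot-a)}\, d\lambda \geq c\, 2^{k C_\delta} |h(a')|^2$. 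Here one combines two ingredients: first, the sub-mean value inequality for the plurisubharmonic function $\log|h|^2$ together with Jensen's inequality, which compares $|h(a')|^2$ to a weighted $L^2$-mean of $|h|^2$; second, the information from Lemma \ref{singularitylemma} which, on choosing $\epsilon$ arbitrarily close to $\delta\tau$, yields $\int_{B(a,2^{-N})} e^{-2\varphi - 2(1-\epsilon)\psi(\cdot-a)} = \infty$. Factoring $e^{-2(1-\epsilon)\psi} = e^{-2\psi}\cdot e^{2\epsilon\psi}$ and using the upper bound $\psi(\cdot-a) \leq l\log|\cdot-a| + O(1)$ (which encodes $\nu_0(\psi) = l$) forces the integrals $\int_{A_k} e^{-2\varphi-2\psi(\cdot-a)}$ to grow at the rate $2^{2\epsilon l k}$. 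Assembling these pieces produces the desired exponent $C_\delta = \delta\tau l$ after letting $\epsilon \to \delta\tau$.

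For the second assertion, $\nu_{a,\psi}(\varphi) < 1$ means $\int_\Omega e^{-2\varphi-2\psi(\cdot-a)}\, d\lambda < \infty$, so the constant function $h\equiv 1$ belongs to $\mathcal{H}_a$. I would show that $\|1\|_{a'}^2 = \int_\Omega e^{-2\varphi - 2\psi(\cdot - a')}\, d\lambda$ remains uniformly bounded as $a'$ varies over a small neighborhood of $a$. The same H\"older-continuity argument used in the proof of Lemma \ref{annulilemma}, applied in the opposite direction, gives the companion inequality $e^{-2\psi(\cdot-a')} \leq C\, e^{-2\psi(\cdot-a)}$ on the exterior region $\{|\zeta-a| \geq c|a'-a|^{1/R}\}$. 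Splitting the integral into this exterior part (controlled by the finite integral $\int e^{-2\varphi-2\psi(\cdot-a)}$) and a small ball around $a$ (controlled by the local boundedness of $\varphi$ there and by the local integrability of $e^{-2(1+\tau)\psi}$ built into the admissible-weight conditions) yields a uniform bound $\|1\|_{a'}^2 \leq M$. By (\ref{bmanchar}), $B_{a'}^\psi(a') \geq M^{-1} > 0$ on a neighborhood of $a$, so $\log B_\cdot^\psi(\cdot)$ is bounded below near $a$ and its Lelong number at $a$ vanishes.

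The principal obstacle is extracting the sharp exponent $C_\delta/R$ in the first assertion: the Jensen and sub-mean-value comparison must be combined with the quantitative non-integrability from Lemma \ref{singularitylemma} in a way that loses no information. The factor $R$ reflects the scale mismatch between the distance $|a'-a| = 2^{-Rk}$ and the scale $2^{-k}$ at which Lemma \ref{annulilemma} is effective, the factor $l = \nu_0(\psi)$ governs how singular the weight is, and $\delta\tau$ measures the "slack" in the generalized Lelong number of $\varphi$ that can be transferred into the weight via Lemma \ref{singularitylemma}.
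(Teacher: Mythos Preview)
Your overall architecture for the first assertion is close to the paper's, but there is a genuine gap in the step where you claim that Lemma~\ref{singularitylemma} ``forces the integrals $\int_{A_k} e^{-2\varphi-2\psi(\cdot-a)}$ to grow at the rate $2^{2\epsilon l k}$.'' The divergence $\sum_k \int_{A_k}|\zeta|^{2\epsilon l}e^{-2\varphi-2\psi}=\infty$ does \emph{not} yield a uniform lower bound on each annulus term; it only prevents all of them from being small simultaneously. Consequently you cannot hope to prove $|h(a')|^2\leq C\,2^{-kC_\delta}$ for \emph{every} $a'$ with $|a'-a|=2^{-Rk}$. The paper resolves this by arguing by contradiction (Lemma~\ref{sequencelemma}): if for every large $k$ one had $|h^k(\zeta)|>|\zeta|^{C_\delta}$ throughout $B(0,2^{-k})$, then each annulus integral would be uniformly bounded, making the series convergent --- a contradiction. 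This yields the desired bound only along a \emph{subsequence}, which suffices because the defining quotient for the Lelong number is monotone. Your proposed sub-mean-value/Jensen step is also problematic: $a'$ lies in the hole of the annulus $A_k$, and the sub-mean-value inequality produces an \emph{unweighted} average over a ball centred at $a'$, not a weighted integral over $A_k$. The paper instead uses a global Cauchy estimate $|h'|\leq C$ (coming from the uniform $L^2$ bound) to transfer the bound from the auxiliary point $b_{k_j}\in B(0,2^{-k_j})$ to $a_{k_j}$; this Taylor step is precisely where the loss $C_\delta\mapsto C_\delta/R$ occurs.

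For the second assertion your argument is more elaborate than necessary and contains an error: you invoke ``the local boundedness of $\varphi$'' on the inner ball, but $\varphi$ is only bounded \emph{above}, and it is $e^{-2\varphi}$ that must be controlled. The paper's route is much shorter: since $\nu_{a,\psi}(\varphi)<1$ the weight $e^{-2\varphi-2\psi(\cdot-a)}$ is locally integrable at $a$, so H\"ormander's $L^2$ estimates produce a holomorphic $h$ with $h(a)\neq 0$ and $\|h\|_a\leq 1$, whence $B_a^\psi(a)>0$. Because $z\mapsto\log B_z^\psi(z)$ is plurisubharmonic and finite at $a$, its Lelong number there is automatically zero --- no uniform neighbourhood bound is needed.
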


Without loss of generality, we can assume that $a=0$. 
Recalling the definition of the classical Lelong number, we see that we want to show that 
\be \label{lelongdef} 
\lim_{r \rightarrow 0}  \frac{ \sup_{|z|=r} \log B_{z}^{\psi}(z) }{\log r}  \geq C_{\delta} /R .
\ee
The idea of the proof is the following. 
The assumption $\nu_{0,\psi} (\varphi)=1+\delta$ essentially means that $$\int_0 e^{-2 \frac{\varphi}{1+ \delta} - 2\psi} = \infty.$$
Thus, the weight $\varphi + \psi$ has a rather large singularity at the origin. If we move the singularity of $\psi$ by translating it
to an arbitrary point $a'$,
then if $a'$ is small enough, the weight $\varphi(\zeta) + \psi(\zeta - a')$ will have a rather large singularity at the point $a'$.
As we will show, the singularity will actually be so large that if $h \in \Lambda(a')$, that is if $h$ is holomorphic and satisfies
$$
\int_{\Omega} \left| h( \zeta ) \right| ^2 
e^{ - 2 \varphi (\zeta) - \psi( \zeta-a'  )} d \lambda ( 	\zeta) \leq 1  ,
$$
then $h$ is forced to be small at the point $a'$. In fact, 
\be \label{smallness}
|h(a')|^2 \leq |a'|^{C_\delta/ R} .
\ee
This would be  enough to prove the proposition, but the following observations show that in fact it will be enough to show something weaker.
By Cauchy estimates we will see that it suffices to find just some point $z_0$ near the origin for which $|h(z_0)|^2 \leq |z_0|^{C_\delta }$,
if $h \in \Lambda(a')$. 
This simplifies things considerably.
Also, since we, in (\ref{lelongdef}), are dealing with a limit, it suffices to find a sequence $r_k$ tending to 0, for which the inequality (\ref{lelongdef}) holds.
This means that instead applying the above idea to arbitrary points $a'$ near the origin, we merely need to apply it to points of a sequence $a_k$
tending to 0. We now turn to the details. 
\\
\\

\begin{lma} \label{sequencelemma}
Assume $\nu_{0,\psi} (\varphi)=1+\delta$.   
Fix any sequence $a_k \rightarrow 0$ with $|a_k|=2^{-Rk}$ and for every $k$ choose a corresponding $h^k \in \Lambda(a_k)$.
Then $\{ a_k \}$ contains asubsequence $\{ a_{k_j} \}$, for which there exists $b_{k_j} \in B(0,2^{-{k_j}})$ with, 
$$|h^{k_j}( b_{k_j} )| \leq  \left|b_{k_j} \right| ^{C_{\delta} }.$$ 
\end{lma}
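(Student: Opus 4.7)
My plan is to argue by contradiction. Suppose the conclusion fails; then there is some $K_0$ such that, for every $k\ge K_0$ and every $b\in B(0,2^{-k})$, one has $|h^k(b)|>|b|^{C_\delta}$. I aim to contradict (\ref{epsilonn}).

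Fix such a $k$. Since $|a_k|=2^{-Rk}$ is much smaller than the scale $2^{-k}$, Lemma \ref{annulilemma} applies on the dyadic annulus $A_k:=\{2^{-(k+1)}\le|\zeta|\le 2^{-k}\}\subset B(0,2^{-k})$ and gives $e^{-2\psi(\cdot-a_k)}\ge C\,e^{-2\psi}$ there. Restricting the inequality defining $\Lambda(a_k)$ to $A_k$ and invoking the annulus lemma yields $\int_{A_k}|h^k|^2\,e^{-2\varphi-2\psi}\,d\lambda\le \frac{1}{C}$. The contradiction hypothesis forces $|h^k|^2>|\zeta|^{2C_\delta}$ on $A_k$, so
\[
\int_{A_k}|\zeta|^{2C_\delta}\,e^{-2\varphi-2\psi}\,d\lambda\le \frac{1}{C}.
\]

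The next step is to pass to the integrand of (\ref{epsilonn}). Using $\psi\le l\log|\zeta|+O(1)$, a consequence of $\nu_0(\psi)=l$, one has $e^{2\epsilon\psi}\le C|\zeta|^{2\epsilon l}\le C\cdot 2^{-2k\epsilon l}$ throughout $A_k$, so the previous estimate sharpens to
\[
\int_{A_k}|\zeta|^{2C_\delta}\,e^{-2\varphi-2(1-\epsilon)\psi}\,d\lambda\le C\cdot 2^{-2k\epsilon l}.
\]
Summing over the disjoint annuli $\{A_k\}_{k\ge K_0}$ produces a convergent geometric series, hence $\int_{B(0,2^{-K_0})}|\zeta|^{2C_\delta}\,e^{-2\varphi-2(1-\epsilon)\psi}\,d\lambda<\infty$.

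The final step converts this finite bound into a contradiction with the divergence (\ref{epsilonn}): the isolated-singularity hypothesis $\psi\ge M\log|\zeta|$ allows one to absorb the factor $|\zeta|^{2C_\delta}$ into a slight shift of the weight, replacing $(1-\epsilon)\psi$ by $(1-\epsilon')\psi$ for some $\epsilon'<\delta\tau$, and the resulting convergence contradicts Lemma \ref{singularitylemma}. I expect the main obstacle to be the tight bookkeeping of constants in this last step: since $C_\delta=\delta\tau l$ and $M\ge l$, the naive substitution gives $\epsilon'$ only marginally below $\delta\tau$. Overcoming this likely requires applying Lemma \ref{annulilemma} on the fuller region $\{2^{-Rk}\le|\zeta|\le 2^{-k}\}$, permissible because $R\alpha\ge 2M$, so as to accumulate the geometric decay across many scales; the factor $1/R$ appearing in the statement of Proposition \ref{kernellemma} reflects precisely this widening of the annulus.
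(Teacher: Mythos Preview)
Your steps 1--5 are essentially the same as the paper's, but the final ``absorption'' step is a genuine gap, and your proposed workaround (widening the annulus to $\{2^{-Rk}\le|\zeta|\le 2^{-k}\}$) does not address it. Concretely: from $\psi\ge M\log|\zeta|$ you get $|\zeta|^{2C_\delta}\le e^{(2C_\delta/M)\psi}$, so the inequality points the wrong way to remove the factor; and if instead you try to use $\psi\le l\log|\zeta|+O(1)$ to compare with $e^{-2(1-\epsilon')\psi}$, you need $\epsilon'\le \epsilon-C_\delta/l=\epsilon-\delta\tau<0$, which is impossible. The factor $1/R$ in Proposition~\ref{kernellemma} does not arise from widening annuli in this lemma; it comes later, from the Taylor/Cauchy estimate that passes from the point $b_{k_j}$ at scale $2^{-k_j}$ to the point $a_{k_j}$ at scale $2^{-Rk_j}$.

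The actual fix is much simpler and is what the paper does: do \emph{not} try to recover $C_\delta=\delta\tau l$ directly. Prove the lemma first with any exponent $C_\delta$ strictly smaller than $\epsilon l$. Then, in place of your step~4, bound the integrand of (\ref{epsilonn}) itself on each annulus:
\[
\int_{A_k} e^{-2\varphi-2(1-\epsilon)\psi}
\;\le\; C\int_{A_k} |\zeta|^{2\epsilon l}\,e^{-2\varphi-2\psi}
\;\le\; C\,2^{-2k(\epsilon l - C_\delta)}\int_{A_k} |\zeta|^{2C_\delta}\,e^{-2\varphi-2\psi}
\;\le\; C'\,2^{-2k(\epsilon l - C_\delta)},
\]
using $e^{2\epsilon\psi}\le C|\zeta|^{2\epsilon l}$ and $|\zeta|^{2(\epsilon l-C_\delta)}\le 2^{-2k(\epsilon l-C_\delta)}$ on $A_k$, together with your step~3 bound. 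Summing over $k$ now directly contradicts (\ref{epsilonn}). In other words, the power $|\zeta|^{2C_\delta}$ coming from the contradiction hypothesis is what \emph{produces} the geometric decay, not an extra factor to be absorbed afterwards. Since $\epsilon$ may be taken arbitrarily close to $\delta\tau$, this yields the lemma for every $C_\delta<\delta\tau l$, which suffices for the application.
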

\begin{proof}
The lemma is proven under the assumption $C_\delta = \epsilon l$. 
The general case with $C_\delta = \delta \tau l$ then follows, since $\epsilon$ can be choosen
arbitrarily close to $\delta \tau$.
We will prove the lemma by arguing via contradiction. The negation of the statement is the following:
For every $k$ larger than some finite number, which we can assume to be the $N$ figuring in ($\ref{epsilonn}$),
$$|h^k( \zeta )| >  \left|\zeta \right| ^{C_{\delta} },$$
for every $\zeta \in B(0,2^{-k})$.
Let us assume this negation.
Then, for every $a_k$ we have, since $h^k \in \Lambda(a_k) $, 
$$
1 \geq
\int_{B(0, 2^{-k})}   \left|   \zeta  \right| ^{{C_{\delta}}}e^{ - 2 \varphi (\zeta) - 2 \psi( \zeta -a_k)}d \lambda ( 	\zeta) \geq
\int_{A(k)}   \left|   \zeta  \right| ^{{C_{\delta}}}e^{ - 2 \varphi (\zeta) - 2 \psi( \zeta -a_k)}d \lambda ( 	\zeta),
$$
where $A(k)$ denotes the annulus $B(0,2^{-k}) \setminus B(0,2^{-k-1})$.   
Since for $\zeta \in A(k)$
$$ e^{ -2 \psi(\zeta-a_k)} \geq C e^{ -2 \psi(\zeta)} $$
by lemma \ref{annulilemma}, we deduce that
\be \label{annulusintegral}
C \geq
 \int_{A(k)}   \left|   \zeta  \right| ^{{C_{\delta}}}e^{ - 2 \varphi (\zeta) - 2 \psi( \zeta)}d \lambda ( 	\zeta).
\ee
Now, remember that $\epsilon$ was choosen so that
$$ \infty = \int_{B(0,1/2^N)} e^{ - 2 \varphi (\zeta) - 2(1 -   \epsilon ) \psi( \zeta)  }d \lambda( \zeta).$$  
Thus, by covering the ball $B(0,1/2^N)$ by annuli $B(0,2^{-k}) \setminus B(0,2^{-k-1})$, we get
\begin{eqnarray*}
\infty = \int_{B(0,1/{2^N})} e^{ - 2 \varphi (\zeta) - 2(1 -   \epsilon ) \psi( \zeta)  }d \lambda( \zeta)
\leq
 C \sum_{k}
 \int_{A(k)}   \left| \zeta  \right|^{2 \epsilon l} e^{ - 2 \varphi (\zeta) -  2 \psi( \zeta) } d \lambda ( 	\zeta) \\
\leq 
 C \sum_k  2^{-k \epsilon l} 
 \int_{A(k)}    \left| \zeta \right| ^{\epsilon l} e^{ - 2 \varphi (\zeta) - 2 \psi( \zeta )}d \lambda ( 	\zeta)
 \leq (\ref{annulusintegral}) \leq 
  C \sum_k 2^{-k \epsilon l}  < + \infty,
 \end{eqnarray*}
 where we in the first inequality use the fact that $\nu_0(\psi)<l \Rightarrow e^{2 \epsilon \psi( \zeta)} \leq C | \zeta|^{2 \epsilon l} $ for $|\zeta|\leq2^{-N}$, if $N$ is large enough.
 This establishes the desired contradiction.
 \end{proof}

 \begin{proof} \textit{(of Proposition (\ref{kernellemma}))}
Fix a point $a \in \Omega$ with $|a|=2^{-Rk}$ for some $k\in \mathbb{N}$, and choose an $h \in \Lambda(a),$
for which $|h( b )| \leq  \left| b \right| ^{C_{\delta}}$ for some $b \in B(0,2^{-{k}})$.
We claim that in fact such an $h$  satisfies the following estimate:
\be \label{contprop}
|h(a)| \leq D |a|^{C_{\delta}/R}, 
\ee 
for some constant $D\geq0$ which does not depend on $h$ nor $k$:
Since $ \varphi$ and $\psi$ are locally bounded from above, every $h\in\Lambda(a)$ satisfies:
$$ \int_{ \Omega } \left| h( \zeta ) \right| ^2 d \lambda ( \zeta) \leq C .$$ Thus, for $\Omega' \subset \subset \Omega$,
by applying Cauchy estimates on $h$, we see that  $$\left| h'( \zeta ) \right| ^2 \leq C ,$$ in $ \Omega'$, where $C$ is some constant independent of ${k}$. 
By using a first order Taylor expansion of $h$, we conclude that 
$$|h(a)| \leq |h(b)|+C|a-b| \leq |b|^{C_\delta} + C|b|      \leq D \left|a \right|^{C_{\delta}/R},$$ 
for some constant $D\geq0$ independent of $h$ and $k$, as promised. \\

Let us complete the proof:
Assume that $\nu_{0, \psi}(\varphi) = 1 + \delta.$ 
Choose $\rho>0$ and fix a sequence $\{ a_k \} \subset \Omega$ with $|a_k|=2^{-Rk}$ satisfying the following:
$$ \sup_{|z|=2^{-Rk}} \log B_{z}^{\psi}(z) \leq \log B_{a_k}^{\psi}(a_k) + \rho,$$
for every $k$. 
In view of (\ref{bmanrealization}),
which says that we can actually find a holomorphic function in $ \Lambda(z) $ realising
the Bergman function at $z$,
 we can find for each $k$, a funcion $h^k \in \Lambda(a_k)$ for which 
\be \label{specreal}
h^k(a_k) = B_{a_k}^{\psi}(a_k) .
\ee
By lemma (\ref{sequencelemma}) we can extract a subsequence $\{a_{k_j} \}$ with a corresponding sequence $\{ b_{k_j} \} $, where 
$|b_{k_j}| =2^{-k_j} $,
for which $$ |h^{k_j}(b_{k_j})| \leq |b_{k_j}|^{C_{\delta}} .$$
The estimate (\ref{contprop}) implies that $$ \log B_{a_{k_j}}^{\psi}(a_{k_j}) \leq \log| a_{k_j}|^{\cdelta / R} + D.$$

Thus we obtain (observe that the denominator is \textit{negative}),
$$ \lim_{r \rightarrow 0}  \frac{ \sup_{|z|=r} \log B_{z}^{\psi}(z) }{\log r} 
= 
\lim_{j \rightarrow \infty }  \frac{ \sup_{|z|=2^{-R k_j}} \log B_{z}^{\psi}(z) }{\log 2^{-R k_j}} 
\geq 
\lim_{j \rightarrow \infty }  \frac{  \log B_{a_{k_j}}^{\psi}(a_{k_j}) + \rho}{\log 2^{-R k_j}} 
\geq 
\cdelta / R,
$$
and we are done in this case.\\ \\
If $\nu_a(\varphi,\psi)<1$, then
an application of Hörmanders $L^2$-methods (cf. \cite{Hormander})  provides us with an holomorphic function $h$, satisfying $|h(a)|^2 > 0$, and the integral over $\Omega$ of $h$ with respect to the weight $ e^{ - 2 \varphi (\cdot) - 2 \psi( \cdot - a )} $ is less than $1$. In view of (\ref{bmanchar}) this implies that $B_a^{\psi}(a) > 0$, hence $ \nu_a(\varphi, \psi)=0$.

\end{proof}

We can now prove the analogue of Siu's theorem for our generalized Lelong number, using an argument due to Kiselman. 
\begin{thm} \label{analyticity_thm}
Let $\Omega \in \mathbb{C}^n$ be open and pseudoconvex and $\varphi$ be a plurisubharmonic function in $\Omega$.
Then if $\rho > 0$ ,
$$ \{ z \in \Omega : \nu_{z,\psi}( \varphi) \geq \rho  \}$$
is an analytic subset of $\Omega$.
\end{thm}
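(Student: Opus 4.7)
The plan is to use Kiselman's technique of attenuating the singularities, carried out via the Bergman function as prepared in Proposition \ref{kernellemma}, and then invoke Siu's classical analyticity theorem. I will exhibit the super-level set $\{z : \nu_{z,\psi}(\varphi) \geq \rho\}$ as a countable intersection of analytic sets.

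By the scaling property $\nu_{z,\psi}(c\varphi) = c\,\nu_{z,\psi}(\varphi)$ I may replace $\varphi$ with $\varphi/\rho$ and reduce to the case $\rho = 1$. For each integer $m \geq 1$ put $\varphi_m := (1 + 1/m)\varphi$, and let $B^m_a(\zeta, z)$ denote the Bergman kernel on $\Omega$ associated to the weight $\varphi_m(\cdot) + \psi(\cdot - a)$. Define the diagonal Bergman function $u_m(z) := \log B^m_z(z)$. Since $\Omega$ is pseudoconvex, Berndtsson's theorem (cited just before Proposition \ref{kernellemma}) gives that $(a, z) \mapsto \log B^m_a(z)$ is plurisubharmonic on $\Omega \times \Omega$; restricting to the diagonal $a = z$ shows $u_m \in PSH(\Omega)$.

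Let $A_m := \{z \in \Omega : \nu_z(u_m) \geq \tau l / (m R)\}$, where $\tau, l, R$ are the constants from Proposition \ref{kernellemma}. By Siu's theorem each $A_m$ is analytic in $\Omega$. The key identity is
$$\{z \in \Omega : \nu_{z,\psi}(\varphi) \geq 1\} = \bigcap_{m \geq 1} A_m.$$
For the inclusion $\subseteq$: if $\nu_{z,\psi}(\varphi) \geq 1$ then $\nu_{z,\psi}(\varphi_m) = (1+1/m)\nu_{z,\psi}(\varphi) \geq 1 + 1/m$, so Proposition \ref{kernellemma} applied to the weight $\varphi_m$ with $\delta = 1/m$ yields $\nu_z(u_m) \geq \tau l/(mR)$, whence $z \in A_m$. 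For the inclusion $\supseteq$: if $z$ lies in every $A_m$ then $\nu_z(u_m) > 0$ for all $m$, and the contrapositive of the second half of Proposition \ref{kernellemma} forces $\nu_{z,\psi}(\varphi_m) \geq 1$, i.e., $\nu_{z,\psi}(\varphi) \geq m/(m+1)$; letting $m \to \infty$ gives $\nu_{z,\psi}(\varphi) \geq 1$.

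Since an arbitrary intersection of analytic subsets of $\Omega$ is analytic (by Noetherianity of the local rings of germs of holomorphic functions), the displayed identity finishes the proof. The substantive content lies entirely in Proposition \ref{kernellemma}; once it is available, the assembly of the super-level set as an intersection is essentially formal. The only thing one must check is that the constants $\tau, l, R$ in Proposition \ref{kernellemma} depend only on $\psi \in W(\tau, l, M, \alpha)$, and not on the particular psh function playing the role of $\varphi$, so that the proposition applies uniformly to each weight $\varphi_m$ and produces the lower bound $\tau l/(mR)$ used in the definition of $A_m$.
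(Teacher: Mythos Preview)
Your proof is correct and follows essentially the same strategy as the paper's: use Proposition~\ref{kernellemma} to attenuate the singularities of $\varphi$ via Bergman functions, reduce to a classical analyticity theorem, and realize the super-level set as an intersection of analytic sets. The paper organizes the final step slightly differently---it works with a single Bergman function for $\varphi$, obtains a sandwich $\{z:\nu_{z,\psi}(\varphi)\geq 1+\delta\}\subset Z_\delta\subset\{z:\nu_{z,\psi}(\varphi)\geq 1\}$ by passing through Skoda's inequality and the integrability index, and then rescales and intersects over $\delta$---whereas you rescale $\varphi$ first, apply Siu's theorem directly to each $u_m$, and obtain the equality $\{\nu_{z,\psi}(\varphi)\geq 1\}=\bigcap_m A_m$ without the intermediate sandwich; the substance is the same.
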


\begin{proof}
We first note that if $\psi=0$ then $\nu_{a,0}( \varphi)$ is the same as the integrability index of $\varphi$
for which the conclusion of the theorem holds (see e.g. \cite{Kiselman}). 
Using the notation of proposition \ref{kernellemma} we define  $$ \Psi(z) = 3 n \frac{\log B_z^{\psi} (z)}{\cdelta / R}, z \in \Omega.$$
The core of the proof is to show that
$$ \{z \in \Omega:  \nu_{z,\psi}( \varphi) \geq 1 + \delta\}
\subset
\{z \in \Omega:  e^{-2\Psi} \notin L_{Loc}^1(z) \}
\subset
\{z \in \Omega:  \nu_{z,\psi}( \varphi) \geq 1\}.
$$
This we can accomplish, as follows: \\ \\
If for $a \in \Omega$ we have that $\nu_{a,\psi}( \varphi) \geq 1 + \delta $ then due to Proposition \ref{kernellemma}, the classical Lelong number of $\Psi$ at $a$ is greater than $3n$ since 
$$\nu_a( \Psi) \geq \frac{3n \cdot \cdelta/R}{\cdelta /R} = 3 n.$$
By Skoda's inequality ($\ref{skoda_ineq}$) we have that $\nu_a( \Psi) \leq n \nu_{a,0} (\Psi)$ which shows that the integrabilty index of $\Psi$ at $a$ is larger than or equal to $3$. In particular, this
implies that $e^{-2 \Psi( \cdot ) }$ is not locally integrable at $a$ and proves the first of the inclusions. \\ 

For the second one, assume that $$ \nu_{a,\psi}(\varphi) < 1.$$
This implies that $ e^{ -2 \varphi (\zeta) - 2\psi ( \zeta - a )} $ is locally integrable at $a$. As noted above, an 
application of Hörmanders $L^2$-methods gives us a holomorphic function $h$ in $\Omega$ such that $|h(a)|^2 > 0$, and the integral of $h$ with respect to the weight $ e^{ - 2 \varphi (\cdot) - 2 \psi( \cdot - a )} $ is less than $1$.
Thus the function $ z \mapsto B_z^{\psi} (z)$, being defined as that supremum of the modulus square of all holomorphic functions whose integral with respect to our weight is less than or equal to 1, 
is strictly positive at $a$, which implies $$ \Psi(a)>-\infty.$$ 
But (see e.g.\cite{Hormander}) every plurisubharmonic
function $u$ satisfies that $e^{-2u}$ is locally integrable around the points where it is finite, and thus we see that
$$ e^{-2 \Psi} \in L_{Loc}^1(a),$$
which proves the second inclusion.\\
\\
As noted above, we know that set $\{z \in \Omega:  e^{-2\Psi} \notin L_{Loc}^1(z) \}$ is analytic in $\Omega$. Thus, by rescaling
we obtain analytic sets $Z_{\delta,\rho}$ such that 
$$ \{z \in \Omega :  \nu_{z,\psi} ( \varphi) \geq \rho \}
\subset
Z_{\delta,\rho}
\subset
\{z \in \Omega:  \nu_{z,\psi}( \varphi) \geq \frac{\rho}{1+\delta}\},
$$
which implies
$$ \{z \in \Omega:  \nu_{z,\psi}( \varphi) \geq \rho \} = \bigcap_{\delta>0} Z_{\delta,\rho}.$$
Since the intersection of any number of analytic sets is analytic, we are done.
\end{proof}

As a conequence of this theorem we can define the following concept, introduced in the classical
case by Siu(\cite{Siu}):

\begin{df}
For $Z$ an analytic set in $\Omega$, we define the \textit{generic generalized Lelong number} of $\varphi$ by  $$ m_Z^{\psi}( \varphi) = \inf \{\nu_{z,{\psi}}( \varphi); z \in Z  \}$$
\end{df}
We have the following lemma by precisely the same argument as in the classical case.
\begin{lma}
$\nu_{z,{\psi}} (\varphi) = m_Z^{\psi}( \varphi)$ for $z \in Z \setminus Z'$ where $Z'$ is a union of countably many proper analytic subsets of $Z$. 
\end{lma}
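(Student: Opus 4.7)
The plan is to combine the analyticity theorem just established (Theorem \ref{analyticity_thm}) with the elementary observation that an infimum is approximated by nearby values. By Theorem \ref{analyticity_thm}, for every real number $c>0$ the set
$$Z_c := \{z \in \Omega : \nu_{z,\psi}(\varphi) \geq c\}$$
is an analytic subset of $\Omega$. In particular $Z \cap Z_c$ is an analytic subset of $Z$.

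First I would restrict attention to rational parameters in order to stay countable. Set $m := m_Z^{\psi}(\varphi)$. For every rational $c$ with $c > m$, the definition of $m$ as an infimum produces a point $z_0 \in Z$ with $\nu_{z_0,\psi}(\varphi) < c$; hence $z_0 \notin Z_c$, so $Z \cap Z_c$ is a \emph{proper} analytic subset of $Z$. Define
$$Z' := \bigcup_{\substack{c \in \mathbb{Q} \\ c > m}} (Z \cap Z_c),$$
which is a countable union of proper analytic subsets of $Z$.

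Next I would verify that the desired equality holds on the complement. If $z \in Z \setminus Z'$, then $z \notin Z_c$ for every rational $c > m$, which means $\nu_{z,\psi}(\varphi) < c$ for all such $c$. Taking the infimum over these $c$ yields $\nu_{z,\psi}(\varphi) \leq m$. Since $z \in Z$ forces $\nu_{z,\psi}(\varphi) \geq m$ by the definition of $m$ as an infimum on $Z$, we conclude $\nu_{z,\psi}(\varphi) = m = m_Z^{\psi}(\varphi)$.

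The only real content is the analyticity of the super-level sets, which is exactly Theorem \ref{analyticity_thm}; everything else is bookkeeping. The one subtle point to double-check is the properness of each $Z \cap Z_c$ as a subset of $Z$ (so that $Z'$ genuinely sits in a countable union of proper analytic subsets and does not accidentally swallow an irreducible component of $Z$). This is immediate from the characterization of $m$ as an infimum over all of $Z$: a point witnessing $\nu_{\cdot,\psi}(\varphi) < c$ lies in $Z$ by definition and therefore in $Z \setminus Z_c$, so $Z \not\subset Z_c$.
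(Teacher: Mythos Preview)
Your argument is correct and is essentially identical to the paper's own proof: both take $Z' = \bigcup_{c \in \mathbb{Q},\, c > m_Z^\psi(\varphi)} (Z \cap \{ \nu_{\cdot,\psi}(\varphi) \geq c \})$, invoke Theorem~\ref{analyticity_thm} for analyticity, and observe that each piece is proper in $Z$ because the infimum is witnessed. Your write-up is in fact more detailed than the paper's, which compresses the verification into a single sentence.
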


\begin{proof}
Put $Z' = \bigcup_{c>m_Z^{\psi}, c \in \mathbb{Q}} Z \cap E_c^{\psi}$, where $E_c^{\psi} = \{ z \in Z : \nu_{z,{\psi}} (\varphi) \geq c \}. $
Then each $Z \cap E_c^{\psi}$ is an analytic proper subset of $Z$ and $\nu_{z,{\psi}} (\varphi) = m_Z^{\psi}( \varphi)$ on $Z \setminus Z'$
by construction.
\end{proof}

\section{Approximation of plurisubharmonic functions by Bergman kernels} \label{approx_section}
A well known result due to Demailly (see for instance \cite{DemaillyKollar}) makes it possible to approximate a plurisubharmonic
function $\varphi$, by the logarithm of the Bergman function ($\Psi^m$) with respect to the weight $e^{-2m\varphi}$, as $m$ tends to infinity. 
Furthermore, the approximation is continuous with respect to the (classical) Lelong number: $$\nu_{z,n-1}(\Psi^m) \rightarrow \nu_{z,n-1}(\varphi),$$
as $m \rightarrow \infty$.
We will now show that the same holds true using the Bergman function with respect to the weight $e^{-2m\varphi-2\psi(\cdot - x)} $
where $x$ is the point at which we evaluate the Bergman function. 
The argument mimics closely that of Demailly's (cf. \cite{DemaillyKollar}), with some minor changes to fit our case.
To begin with, we modify the construction of $\mathcal{H}_a$ slightly:
\begin{df}
For each $m\in \mathbb{N}$ and $a \in \Omega$ we let 
$$\mathcal{H}_a^m = \mathcal{O}(\Omega) \cap L^2 ( \Omega, e^{-2 m \varphi( \cdot) - 2 \psi( \cdot - a )} ).$$
\end{df}
Denote by $B_{a}^m$ (for notational convinience we supress the dependence on $\psi$) the Bergman function for $\mathcal{H}_a^m$, and
put $$ \Psi_a^m(z) = \frac{1}{2 m} \log{B_{a}^m}(z),$$
for $z\in \Omega$.
Fix $a \in \Omega$. If $h \in \H_a^m$ and has norm bounded by 1, 
the mean value property for holomorphic functions shows that for $r=r(a)>0$ small enough, 
\begin{eqnarray} 
|h(a)|^2 
&\leq&
\frac{n!}{\pi^n r^{2n}} \int_{|a-\zeta|<r} |h(\zeta)|^2 d \lambda ( \zeta)  \leq  \nonumber \\
&\leq& \frac{n!}{\pi^n r^{2n}} e^{\sup_{|a-\zeta|<r} \{ 2 m  \varphi( \zeta) + 2 \psi(\zeta-a) \} }   
\int_{|a-\zeta|<r} |h(\zeta)|^2 e^{- 2 m \varphi( \zeta) - 2 \psi(\zeta-a)} d \lambda ( \zeta). \nonumber 
\end{eqnarray}

Thus, if we assume that 
\be \label{psihypotes2}
\psi( \zeta -a ) \leq l \log|\zeta-a|,
\ee
we have that
$$\Psi_a^m(a) 
\leq 
\sup_{|a-\zeta|<r} \{  \varphi( \zeta) +  \frac{1}{2m} 2 \psi(\zeta-a) \} - \frac{1}{2m} \log r^{2n} + C
\leq
\sup_{|a-\zeta|<r} \{  \varphi( \zeta) \} +\frac{1}{m}(l-n) \log r + C/m.$$

Now, assume that 
\be \label{psihypotes} 
\psi(\zeta) \geq (n-\delta)\log|\zeta|,
\ee
for some small, fixed $\delta > 0$.
Fix a point $a$ for which $\varphi(a)>- \infty$.
By considering the 0-dimensional variety $\{ a \}$, we obtain, by the Ohsawa-Takegoshi theorem (see section \ref{properties_and__examples}), the following: 
for every $\xi \in \C$, there exists an $h \in \O(\Omega)$,
satisfying $h(a)=\xi$, and $C_{\delta}>0$ a constant, depending only on the dimension and $\delta$, such that 
$$ \int_{ \Omega} |h(\zeta)|^2 e^{- 2 m \varphi( \zeta) - 2 (n-\delta)\log| \zeta-a|} d \lambda ( \zeta) 
\leq 
C_{\delta} e^{-2m \varphi(a) } |\xi|^2.
$$ 
By the assumption (\ref{psihypotes}) this implies that
$$ \int_{ \Omega} |h(\zeta)|^2 e^{- 2 m \varphi( \zeta) - 2 \psi(\zeta-a)} d \lambda ( \zeta) 
\leq 
C_{\delta} e^{-2m \varphi(a) } |\xi|^2.
$$ 
Since this holds for every $\xi$ we can choose one such that the right-hand-side is equal to 1, i.e. $C_{\delta} e^{-2m \varphi(a) } |\xi|^2=1$. 
Then $h$ satisfies $$ \int_{ \Omega} |h(\zeta)|^2 e^{- 2 m \varphi( \zeta) - 2 \psi(\zeta-a)} d \lambda ( \zeta) 
\leq 
1
$$ 
and 
$$ 
\log|h(a)|^2 = \log|\xi|^2 = -\log C_{\delta} + 2 m \varphi(a).
$$
Thus, 
$$ \Psi_a^m(a) \geq \varphi(a) - \frac{\log C_{\delta}}{2m}.$$
If $a$ is such that $\varphi(a)=- \infty$ this inequality is trivial.
Thus, for every $m$ and $z \in \Omega$ we have that 
\be \label{lelongapprox} 
\varphi(z) - \frac{1}{2m} \log C_{\delta} \leq \Psi_z^m(z) \leq  \sup_{|a-\zeta|<r} \{  \varphi( \zeta) \} +\frac{1}{m}(l-n) \log r + C. 
\ee
We now want to show that this approximation behaves well with respect to generalized Lelong number with weight $\psi$.
To this end, fix $a\in \Omega$ and let $\lambda > \nu_{a,\psi}( \Psi_{ a}^m( \cdot) )$, and put
$$ p = 1 +m \lambda, q=1+\frac{1}{m \lambda}.$$
Then $1/p +1/q=1$  and we apply Hölder's inequality to the following integral, with $r(a)$ so small that $ \{ |a-\zeta|<r(a) \} \subset \subset \Omega$,
$$ 
\int_{|\zeta - a|<r} e^{-\frac{2m}{p} \varphi - 2 \psi(\zeta-a) } d \lambda(\zeta)
=
\int_{|\zeta - a|<r} e^{-\frac{2m}{p} \varphi - 2 \psi(\zeta-a)} (B_{a}^m(\zeta))^{1/p}(B_{a}^m(\zeta))^{-1/p} d \lambda(\zeta),
$$
to obtain, since $-q/p = -1/(m \lambda)$, that it is dominated by 
$$ 
\Big( \int_{|\zeta - a|<r} (B_{a}^m(\zeta)) e^{-2m \varphi}  e^{- 2 \psi(\zeta-a)} d \lambda(\zeta) \Big)^{1/p}
\Big(\int_{|\zeta - a|<r} (B_{a}^m(\zeta))^{-1/m \lambda} e^{ - 2 \psi(\zeta-a)}d \lambda(\zeta)\Big)^{1/q}. 
$$
The first integral can be seen to be finite (cf. \cite{DemaillyKollar},p.21), as well as the second integral, since it equals 
$$ 
\int_{|\zeta - a|<r}  e^{- 2 \log B_a^m(\zeta)/{(2 m \lambda)} - 2 \psi(\zeta-a) } d \lambda(\zeta), 
$$
which is finite due to the assumption on $\lambda$.
Since $\frac{1}{m/p} = \frac{1}{m} + \lambda$  this implies the inequality:
\be \label{psiestimate}
\nu_{a,\psi}(\varphi(\cdot)) \leq \nu_{a,\psi}(\Psi_{a}^m(\cdot)) + 1/m.\ee
We need a lemma:

\begin{lma}
For every $m \in \mathbb{N}$ and $a \in   \Omega$ we have that 
$$ B_{a}^m(\zeta) \geq  C |\zeta-a|^{(\frac{n-\delta}{l})(n+2)} \cdot B_{\zeta}^m(\zeta)$$ for every $\zeta$ in a small neighbourhood of $a$, where $C>0$ is an constant not depending on $\zeta$ or $a$.
\end{lma}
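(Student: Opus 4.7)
Let $h^* \in \mathcal{H}_\zeta^m$ realise the Bergman function, that is $|h^*(\zeta)|^2 = B_\zeta^m(\zeta)$ and $\int_\Omega |h^*|^2 e^{-2m\varphi - 2\psi(w-\zeta)}\,d\lambda(w) \leq 1$. My plan is to manufacture a competitor $\tilde h$ for the variational problem defining $B_a^m(\zeta)$ of the form $\tilde h = s\cdot h^*$, where the holomorphic multiplier $s$ is engineered so that multiplication by $s$ absorbs the transfer of the weight's singularity from $\zeta$ to $a$, rendering $sh^*$ square-integrable against $e^{-2m\varphi-2\psi(\cdot-a)}$ with a uniform bound, while $|s(\zeta)|$ remains as large as possible. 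The extremal characterisation (\ref{bmanchar}) of the Bergman function then immediately yields
\[
B_a^m(\zeta) \; \geq \; \frac{|\tilde h(\zeta)|^2}{\int_\Omega |\tilde h|^2 e^{-2m\varphi - 2\psi(w-a)}\,d\lambda(w)}.
\]

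Since $\psi(w-a) \geq (n-\delta)\log|w-a|$ implies $e^{-2\psi(w-a)} \leq |w-a|^{-2(n-\delta)}$ near $a$, the multiplier $s$ must vanish at $a$ to an order roughly $N = (n-\delta)(n+2)/(2l)$, the precise value determined by the H\"older interpolation below. A convenient realisation is a monomial in the coordinate direction of largest $\zeta_i - a_i$: if $i_0$ is an index realising $\max_i |\zeta_i - a_i|$ and $s(w) = (w_{i_0} - a_{i_0})^N$, then $|s(w)| \leq |w - a|^N$ while $|s(\zeta)| \geq n^{-N/2}|\zeta - a|^N$. To bound $\int |\tilde h|^2 e^{-2m\varphi-2\psi(w-a)}$ uniformly in $\zeta$, I would decompose $\Omega$ into dyadic shells around $a$. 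On shells whose radius is comparable to a suitable power of $|\zeta - a|$, Lemma~\ref{annulilemma} supplies the pointwise comparison $e^{-2\psi(w-a)} \leq C e^{-2\psi(w-\zeta)}$, so the integral is controlled by the given unit $L^2$-bound on $h^*$. On shells close to $a$, one combines $e^{-2\psi(w-a)} \leq |w-a|^{-2(n-\delta)}$ with the lower bound $e^{-2\psi(w-\zeta)} \geq |w-\zeta|^{-2l}$ (derived from $\psi(w-\zeta) \leq l \log|w-\zeta|$) via a H\"older inequality whose exponents are tuned to the ratio $(n-\delta)/l$; this is precisely where the factor $(n+2)/l$ in the final exponent originates.

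The main obstacle is the careful arithmetic bookkeeping that produces the exponent exactly $(n-\delta)(n+2)/l$ rather than the cruder $2n$ that falls out of a naive monomial choice $N=n$. The target value $N=(n-\delta)(n+2)/(2l)$ is in general non-integer, so a single holomorphic monomial will not suffice. Two alternative devices present themselves: one may H\"older-interpolate between two monomial choices of integer degree straddling the target $N$, or one may apply Ohsawa--Takegoshi extension of the constant $1$ on the $0$-dimensional variety $\{\zeta\}$ with respect to the plurisubharmonic weight $m\varphi(w) + \psi(w-a) + k\log|w-\zeta|$ for an auxiliary parameter $k$, the latter producing directly a holomorphic $\tilde h$ whose $L^2$-bound takes the required form. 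Verifying that all admissibility parameters $(\tau,l,M,\alpha)$ combine correctly through the estimates to yield the stated exponent is the delicate remaining task.
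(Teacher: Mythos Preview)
Your multiplier approach is genuinely different from what the paper does. The paper does not multiply; it cuts off and corrects. Given the extremal $h\in\Lambda^m(\zeta)$, the paper observes that on the tiny ball $B\bigl(\zeta,\,2^{-M}|\zeta-a|^{M}\bigr)$ with $M=(n-\delta)/l$ one has the elementary pointwise comparison $e^{-2\psi(z-a)}\le e^{-2\psi(z-\zeta)}$ (since $|z-\zeta|^{l}\le |z-a|^{n-\delta}$ there), multiplies $h$ by a smooth cutoff $\theta$ supported in that ball, and solves $\bar\partial u=(\bar\partial\theta)h$ against the weight $e^{-2m\varphi-2\psi(\cdot-a)-2(n+1)\log|\cdot-\zeta|}$, which forces $u(\zeta)=0$. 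The competitor is $F=\theta h-u$; the exponent $M(n+2)$ drops out of the $L^{2}$ estimate, combining $|\bar\partial\theta|\lesssim|\zeta-a|^{-M}$ with $|z-\zeta|^{-2(n+1)}\lesssim|\zeta-a|^{-2M(n+1)}$ on $\operatorname{supp}\bar\partial\theta$.

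Your plan is actually cleaner than you make it: none of the dyadic shells, Lemma~\ref{annulilemma}, or interpolation to a fractional degree is needed. With $s(w)=(w_{i_0}-a_{i_0})^{n}$, the standing two-sided hypothesis $l\log|\cdot|\ge\psi\ge(n-\delta)\log|\cdot|$ yields, for $|w-a|,|w-\zeta|\le 1$,
\[
|s(w)|^{2}e^{-2\psi(w-a)}\;\le\;|w-a|^{2n}\,|w-a|^{-2(n-\delta)}=|w-a|^{2\delta}\;\le\;1\;\le\;|w-\zeta|^{-2l}\;\le\;e^{-2\psi(w-\zeta)},
\]
so $\|sh^{*}\|_{\mathcal H_{a}^{m}}\le\|h^{*}\|_{\mathcal H_{\zeta}^{m}}\le 1$ outright, and you obtain $B_{a}^{m}(\zeta)\ge n^{-n}|\zeta-a|^{2n}B_{\zeta}^{m}(\zeta)$. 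Your worry about hitting the non-integer target $N=M(n+2)/2$ is misplaced: you need not reproduce the paper's specific exponent, only \emph{some} exponent independent of $m$, since the lemma's sole use is to deduce $\nu_{a,\psi}(\Psi_{a}^{m})\le\nu_{a,\psi}(\Psi_{\cdot}^{m})+O(1/m)$. (Also note that Lemma~\ref{annulilemma} as stated gives the inequality opposite to the one you invoke, though its proof does symmetrise.) Your plan~B, Ohsawa--Takegoshi from $\{\zeta\}$ with an auxiliary $\log|\cdot-\zeta|$ term in the weight, is essentially the paper's $\bar\partial$ argument.
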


\begin{proof}
For a fixed point $a$ in $\Omega$, choose $\zeta \neq a$ with distance less than $\min \{1,dist(a,\partial \Omega) \}$ to each other , and let $$M=\frac{n-\delta}{l}.$$ First, we claim that for $$z \in B(\zeta,2^{-M} |\zeta-a|^{M}),$$ the following inequality holds:
\begin{equation} \label{weightestimate}
e^{-2\psi(a-z)}\leq e^{-2\psi(\zeta-z)}.
\end{equation} 
Indeed, thanks to the assumptions (\ref{psihypotes2}) and (\ref{psihypotes}), for such $z$,
$$ e^{\psi(\zeta-z)}\leq |\zeta-z|^l \leq |a-z|^{n-\delta} \leq  e^{\psi(a-z)},$$  
since $$ |\zeta-z|\leq 2^{(-M)} |\zeta-a|^{M} \leq |z-a|^{M},$$
where in the second inequality we used that $|\zeta-a|\leq 2|z-a| $.
Now, let \[
h\in\Lambda^m(\zeta):=\{h\in\mathcal{O}(\Omega),\int_{\Omega}\left|h(z)\right|^{2}e^{-2m\varphi(z)-2\psi(z-\zeta)}d\lambda(z)\leq1\}\]
 be such that $h(\zeta)=B^m_{\zeta}(\zeta)$. To simplify notation we assume $m=1$, but the following calculations remains valid for any $m$. 
 Take a smooth function $\theta$ with
support in the ball $B(\zeta,2^{-M}|\zeta-a|^{M})$ satisfying $\theta=1$
in a neighbourhood of $B(\zeta,2^{-(1+M)}|\zeta-a|^{M})$, and \begin{equation}
|\bar{\partial}\theta(z)|\leq\frac{{1}}{|a-\zeta|^{2M}}.\label{eq:differentialineq}\end{equation}
Thus for every point $z\in supp\theta$ the inequality (\ref{weightestimate}) holds.
Moreover, we have that \[
e^{-2(n+1)\log|z-\zeta|}=\frac{{1}}{|z-\zeta|^{2(n+1)}}\leq \frac{2^{(n+1)(M+1)}}{|a-\zeta|^{2M(n+1)}}\]
for $z\in supp\bar{\partial}\theta$ .
Putting this information together we obtain the following estimate:

\begin{equation} \label{integralestimate}
\int_{\Omega}|\bar{\partial}\theta h|^{2}e^{-2\varphi(z)-2\psi(a-z)-2(n+1)\log|z-\zeta|}d\lambda(z)
\leq \end{equation}
\[
\leq\frac{2^{(n+1)(M+1)}}{|a-\zeta|^{2M(n+1)}}
\int_{\Omega}|\bar{\partial}\theta h|^{2}e^{-2\varphi(z)-2\psi(\zeta-z)}d\lambda(z)
\leq
\frac{2^{(n+1)(M+1)}}{|a-\zeta|^{2M(n+2)}},\]
since $h\in\Lambda(\zeta).$ Thus, by standard $L^2$-estimates, we can solve the equation \begin{equation}
\bar{\partial}u=\bar{\partial}(\theta h)=\bar{\partial}\theta\cdot h\label{eq:dbar}\end{equation}
with respect to the weight $e^{-2\varphi(z)-2\psi(a-z)-2(n+1)\log|z-\zeta|}.$
The singularity of the weight forces $u$ to vanish at $\zeta$, so if
we define $F=\theta h-u,$ then $F(\zeta)=h(\zeta),$ and $F$ is holomorphic
in $\Omega.$ Moreover, by the triangle inequality, \[
\left(\int_{\Omega}|F|^{2}e^{-2\varphi(z)-2\psi(a-z)}d\lambda(z)\right)^{1/2}\leq\left(\int_{\Omega}|\theta h|^{2}e^{-2\varphi(z)-2\psi(a-z)}d\lambda(z)\right)^{1/2}+\]
 \[
+\left(\int_{\Omega}|u|^{2}e^{-2\varphi(z)-2\psi(a-z)}d\lambda(z)\right)^{1/2}.\]
 Using (\ref{weightestimate}) we have that \[
\int_{\Omega}|\theta h|^{2}e^{-2\varphi(z)-2\psi(a-z)}d\lambda(z)\leq\int_{\Omega}|\theta h|^{2}e^{-2\varphi(z)-2\psi(\zeta-z)}d\lambda(z)\leq1,\]
and also we see that \[
\int_{\Omega}|u|^{2}e^{-2\varphi(z)-2\psi(a-z)}d\lambda(z)\leq \int_{\Omega}|u|^{2}e^{-2\varphi(z)-2\psi(a-z)-2(n+1)\log|z-\zeta|}d\lambda(z)\leq\]
\[
\leq C' \int_{\Omega}|\bar{\partial}\theta h|^{2}e^{-2\varphi(z)-2\psi(a-z)-2(n+1)\log|z-\zeta|}d\lambda(z),\]
where the first inequality comes from the assumption that $|z-\zeta|\leq|a-\zeta|\leq 1$, and the last inequality, as well as the constant $C'$ (which only depends on $\Omega$), comes from the $L^{2}-$estimate obtained
from solving \eqref{eq:dbar}. 
Using (\ref{integralestimate}) we arrive at
\[
\int_{\Omega}|F|^{2}e^{-2\varphi(z)-2\psi(a-z)}d\lambda(z)\leq1+C'\frac{2^{(n+1)(M+1)}}{|a-\zeta|^{2M(n+2)}}\leq\frac{C_1}{|a-\zeta|^{2M(n+2)}},\]
where $C_1$ is a constant independent of $\zeta$ and $a.$ Thus, if
we define the function $$\tilde{F}(z)=\frac{|a-\zeta|^{M(n+2)}}{\sqrt{C_1}}F(z)$$
then $\tilde{F}$ belongs to $\Lambda(a)$ and satifies \[
|\tilde{F(\zeta)|}=C_1^{-1/2}|B^{m}_{\zeta}(\zeta)|\cdot|a-\zeta|^{M(n+2)}.\]
This shows that for every $a$ and $\zeta$ (the inequality is trivial
if $\zeta=a$), \[
|B^{m}_{a}(\zeta)|\geq C |B^m_{\zeta}(\zeta)|\cdot|a-\zeta|^{M(n+2)},\]
where $C=C_1^{-1/2}$ does not depend on $\zeta$ or $a$. 
\end{proof}

Using the lemma we see that for each $a \in \Omega$ ,
$$\frac{{1}}{2m}\log|B^{m}_{a}(\zeta)|\geq\frac{{1}}{2m}\log|B^{m}_{\zeta}(\zeta)|+\frac{{M(n+2)}}{2m}\log|a-\zeta|+C/m,$$
for $\zeta$ close to $a$,
which implies that\[
\nu_{a,\psi}(\Psi_a^m(\cdot))\leq\nu_{a,\psi}(\Psi_{\cdot}^m(\cdot))+\frac{{C}}{2m}.\]
Combining with (\ref{psiestimate}) we obtain that
$$ \nu_{a,\psi}(\varphi(\cdot)) \leq \nu_{a,\psi}(\Psi_{\cdot}^m(\cdot)) + C/m,$$
for every $a \in \Omega$.

On the other hand, the left-hand estimate of (\ref{lelongapprox}) implies that
$$ \nu_{a,\psi}(\varphi(\cdot)) \geq \nu_{a,\psi}(\Psi_{\cdot}^m(\cdot)).$$ Thus we have proved:
\begin{thm}
Assume $\psi$ satisfies
\be 
l\log|z| \geq \psi(z) \geq (n-\delta)\log|z|,
\ee
for some small, fixed $\delta > 0$.
Then for $\varphi \in PSH(\Omega)$, $m \in \mathbb{N}$, $z,a \in \Omega$ and every $r<d(z, \ds \Omega)$ we have that 
\be \label{lel_appr}
\varphi(z) - \frac{1}{2m} \log C_{\delta} \leq \Psi_z^m(z) \leq  \sup_{|z-\zeta|<r} \{  \varphi( \zeta) \} +\frac{1}{m}(l-n) \log r + C/m
\ee
and
\be \label{lel_appr_label}
\nu_{a,\psi}(\varphi(\cdot)) - \frac{C}{m} \leq \nu_{a,\psi}(\Psi_{\cdot}^m (\cdot)) \leq \nu_{a,\psi}(\varphi(\cdot)),
\ee
where $C$ is a constant depending on $\Omega,l,\delta$. 
In particular, $\Psi_{z}^m( z )$ converges to $\varphi(z)$ as $m \rightarrow + \infty$ both pointwise and in $L_{Loc}^1$.
\end{thm}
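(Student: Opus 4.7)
The plan is to assemble the estimates already developed in the discussion preceding the statement; the theorem amounts to a consolidation of what has been proved above.

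First I would record the pointwise two-sided bound (\ref{lel_appr}). The upper half follows by applying the sub-mean-value inequality to $|h(z)|^2$ for any unit-norm $h\in\H_z^m$, and then absorbing the weight factor $e^{2\psi(\zeta-z)}$ into the power $|\zeta-z|^{2l}$ via the hypothesis $\psi(\zeta-z)\leq l\log|\zeta-z|$; this accounts for the $\frac{l-n}{m}\log r$ term. The lower half comes from the Ohsawa-Takegoshi theorem applied to the zero-dimensional subvariety $\{z\}$: one extends a suitably normalized constant to a holomorphic $h$ of norm at most $1$ with prescribed value at $z$, and uses $\psi(\zeta-z)\geq(n-\delta)\log|\zeta-z|$ to dominate the Ohsawa-Takegoshi weight $e^{-2(n-\delta)\log|\zeta-z|}$ by our weight $e^{-2\psi(\zeta-z)}$.

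Next, for the Lelong-number inequalities (\ref{lel_appr_label}), the right-hand inequality $\nu_{a,\psi}(\Psi_\cdot^m(\cdot))\leq\nu_{a,\psi}(\varphi)$ drops out of $\Psi_z^m(z)\geq\varphi(z)-\tfrac{1}{2m}\log C_\delta$ (the left half of (\ref{lel_appr})) together with the monotonicity property (2) of the first lemma in Section \ref{properties_and__examples}. The left-hand inequality combines (\ref{psiestimate}), namely $\nu_{a,\psi}(\varphi)\leq\nu_{a,\psi}(\Psi_a^m(\cdot))+1/m$, with the Bergman-kernel comparison $B_a^m(\zeta)\geq C|\zeta-a|^{M(n+2)}B_\zeta^m(\zeta)$ established just above; taking $\tfrac{1}{2m}\log$ of this comparison and invoking properties (2) and (3) of the first lemma yields
\[
\nu_{a,\psi}(\Psi_a^m(\cdot))\leq\nu_{a,\psi}(\Psi_\cdot^m(\cdot))+\frac{M(n+2)}{2m}\,\nu_{a,\psi}(\log|\cdot-a|),
\]
in which the last factor is a finite constant depending only on $\psi$.

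For the convergence statements, I would choose $r=r(m)\to 0$ slowly enough that $\frac{l-n}{m}\log r(m)\to 0$, e.g. $r(m)=e^{-\sqrt{m}}$. Upper semicontinuity of $\varphi$ forces $\sup_{|z-\zeta|<r(m)}\varphi(\zeta)\to\varphi(z)$, while the lower side of (\ref{lel_appr}) automatically tends to $\varphi(z)$, so the squeeze gives $\Psi_z^m(z)\to\varphi(z)$ pointwise. For $L^1_{\loc}$ convergence, (\ref{lel_appr}) with any fixed small $r_0>0$ sandwiches $\Psi_z^m(z)-\varphi(z)$ between $-\tfrac{\log C_\delta}{2m}$ and a function locally bounded above uniformly in $m$, so dominated convergence closes the argument. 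The main obstacle is the Bergman-kernel comparison between $B_a^m(\zeta)$ and $B_\zeta^m(\zeta)$ as $\zeta$ varies near $a$; this is the only genuinely new input and was dispatched just before the theorem by a $\bar\partial$-argument that exploits the two-sided hypothesis on $\psi$. Once that is in hand, the remainder of the proof is bookkeeping.
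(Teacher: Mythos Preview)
Your proposal is correct and follows essentially the same route as the paper: the two-sided pointwise estimate comes from the mean-value inequality together with $\psi\leq l\log|\cdot|$ for the upper bound and Ohsawa--Takegoshi on $\{z\}$ together with $\psi\geq(n-\delta)\log|\cdot|$ for the lower bound, while the Lelong-number estimate combines the H\"older argument giving (\ref{psiestimate}) with the Bergman-kernel comparison lemma just proved. Your treatment is in fact slightly more explicit than the paper's in two places---you spell out that the constant absorbed in the paper's $C/(2m)$ is $\frac{M(n+2)}{2m}\nu_{a,\psi}(\log|\cdot-a|)$ (finite since $\psi\geq(n-\delta)\log|\cdot|$), and you give the standard $r(m)\to 0$ and dominated-convergence arguments for the final convergence claims, which the paper leaves to the reader.
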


\begin{remark}
If $\psi=0$, as in the original theorem of Demailly, then
$\Psi_a^m$ are plurisubharmonic functions with analytic singularities, and thus we approximate $\varphi$ with plurisubharmonic functions with analytic singluarities. For instance,
this gives a very simple proof of Siu's analyticity theorem for the classical Lelong number. In our setting however, it is unclear, and an interesting question, if the presence of $\psi$ allows for $\Psi_{a}^m $ to have analytic singularities.
\end{remark}

\begin{remark}
One can show that when comparing the approximations  $\Psi_z^m$ to the classical Lelong number we can obtain, instead of
(\ref{lel_appr_label}), the following inequalities:
$$ \nu_{a,n-1}(\varphi(\cdot)) - \frac{n-l}{m} \leq \nu_{a,n-1}(\Psi_{\cdot}^m (\cdot)) \leq \nu_{a,n-1}(\varphi(\cdot)).
$$
The approximation of Demailly, that is with $\psi=0$, satisfied these inequalities with $l=0$.
\end{remark}

\section{Kiselman's directed Lelong number} \label{kiselmans_section}

In this section we relate the classical Lelong number to yet another integral. Using this
relation, we can interpret Kiselman's directional Lelong number (defined below) as our generalized Lelong number, in a generic sense.
Equation (\ref{skodaintegral}) shows us that if $ \nu_0 (\varphi)<1$ and the integral of $e^{-2 \varphi}$  restricted to the coordinate axes is finite, then 
$$ \int_0 e^{ \frac{2\varphi}{1-\epsilon} -2(1-\delta) \sum_{i \neq j} \log|z_i| } d \lambda(z) < +\infty,$$
for every $ \delta >0$ and $j=1...n$, and for some small $\epsilon>0$.
Applying the inequality between geometric and arithmetic mean (i.e. $ \frac{a_1+...+a_n}{n} \geq (a_1 \cdots a_n)^{\frac{1}{n}}$
 for $ a_i \geq 0 $) we see that $$\frac{1}{n}\sum_{j=1}^n e^{-2 \sum_{i \neq j} \log|z_i| } = \frac{1}{n}\sum_{j=1}^n \frac{1}{\prod_{i \neq j} |z_i|^2} \geq \prod_{j=1}^n \frac{1}{\prod_{i \neq j} |z_i|^{\frac{2}{n}}} = \prod_{i=1}^n \frac{1}{|z_i|^{\frac{2(n-1)}{n}}}.$$
This last expression is equal to
$$ e^{-2 \frac{n-1}{n}\sum_{i = 1}^n \log|z_i| }$$
which tells us that
$$ \int_0 e^{ - \frac{2\varphi}{1-\epsilon} -2(1-\delta)\frac{n-1}{n}\sum_{i = 1}^n \log|z_i|} d \lambda(z) < + \infty.$$
Using a similar Hölder argument as in Lemma \ref{singularitylemma}, we get
 $$ \int_0 e^{ - \frac{2\varphi}{1-\epsilon+\delta'} -2\frac{n-1}{n}\sum_{i = 1}^n \log|z_i|} d \lambda(z) < + \infty,$$
for $\delta'>0$ sufficiently small. Furthermore, if $\delta'$ is so small that $\delta' < \epsilon$, then 
 $$ \int_0 e^{ - 2\varphi -2\frac{n-1}{n}\sum_{i = 1}^n \log|z_i|} d \lambda(z) < + \infty.$$
On the other hand, if 
$$ \int_0 e^{ - 2\varphi -2\frac{n-1}{n}\sum_{i = 1}^n \log|z_i|} d \lambda(z) < + \infty$$
then, using the pointwise estimate $ \log|z_i| \leq \log|z|$, we see that 
$$ \int_0 e^{ - 2\varphi -2(n-1)\log|z|} d \lambda(z) < + \infty$$
which is equivalent to $ \nu_{0,n-1} (\varphi) <1.$
Thus we have proved the following lemma:
\begin{lma} \label{dir_lel_lemma}
For a plurisubharmonic function $\varphi$, which satisfies that the restriction of $e^{-2 \varphi}$ to the coordinate axes is integrable,  the condition $\nu_{0,n-1} (\varphi) <1$ is equivalent to
$$ \int_0 e^{ - 2\varphi -2\frac{n-1}{n}\sum_{i = 1}^n \log|z_i|} d \lambda(z) < + \infty.$$
\end{lma}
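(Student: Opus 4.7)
The plan is to prove both directions of the equivalence by combining the iterated Ohsawa--Takegoshi estimate already developed in the proof of Theorem \ref{berndtssons_sats} with the arithmetic--geometric mean trick sketched in the paragraph immediately above the statement.

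For the forward direction, assume $\nu_{0,n-1}(\varphi) < 1$ together with integrability of $e^{-2\varphi}$ on every coordinate axis. Running the Ohsawa--Takegoshi iteration that produced (\ref{skodaintegral}), but now omitting the variable $z_j$ instead of $z_n$ at the last step, yields for every $j \in \{1,\dots,n\}$ and small $\delta>0$
$$\int_{\Omega} \frac{e^{-2\varphi}}{\prod_{i\neq j}|z_i|^{2-2\delta}}\, d\lambda < +\infty.$$
Averaging these $n$ inequalities and applying AM--GM exactly as in the text,
$$\frac{1}{n}\sum_{j=1}^n \prod_{i\neq j}\frac{1}{|z_i|^{2-2\delta}} \;\geq\; \prod_{i=1}^n \frac{1}{|z_i|^{\frac{2(n-1)(1-\delta)}{n}}},$$
gives finiteness of the symmetric integral with weight $\exp\bigl(-2(1-\delta)\tfrac{n-1}{n}\sum_i \log|z_i|\bigr)$. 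One then uses Lemma \ref{singularitylemma}, applied to $\psi(\zeta)=\tfrac{n-1}{n}\sum_i \log|\zeta_i|$, to absorb the $(1-\delta)$ factor into a controlled perturbation of $\varphi$; the hypothesis $\nu_{0,n-1}(\varphi)<1$ together with the freedom to shrink $\delta$ lets us return to the clean weight $\exp\bigl(-2\varphi - 2\tfrac{n-1}{n}\sum_i \log|z_i|\bigr)$, which is the conclusion.

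For the converse direction, the pointwise estimate $\log|z_i|\leq \log|z|$ valid near the origin, combined with the negativity of both sides, gives $\tfrac{n-1}{n}\sum_i \log|z_i| \leq (n-1)\log|z|$ after dividing through, so that
$$\int_{0}e^{-2\varphi-2(n-1)\log|z|}\, d\lambda \;\leq\; \int_0 e^{-2\varphi-2\frac{n-1}{n}\sum_i \log|z_i|}\, d\lambda < +\infty,$$
which by Theorem \ref{berndtssons_sats} forces $\nu_{0,n-1}(\varphi)<1$ (with an additional $\delta$-nudge via Lemma \ref{singularitylemma} to turn $\leq 1$ into $<1$ if needed).

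The main obstacle is bookkeeping the three small parameters at once: the $\delta$ that comes from the Ohsawa--Takegoshi estimate (\ref{skodaintegral}), the $\epsilon$ appearing in Lemma \ref{singularitylemma}, and the slack left by the strict inequality $\nu_{0,n-1}(\varphi) < 1$. These must be chosen in a compatible order so that the final H\"older-type absorption succeeds; the coordinate-axis hypothesis is essential precisely because it provides the base case for the Ohsawa--Takegoshi iteration starting from each of the $n$ axes, which is what enables the symmetric AM--GM step.
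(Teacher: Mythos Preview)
Your sketch follows the paper's argument almost step for step: the Ohsawa--Takegoshi iteration with each coordinate index omitted in turn, the AM--GM symmetrization, a H\"older absorption in the spirit of Lemma~\ref{singularitylemma}, and the pointwise inequality $\log|z_i|\le\log|z|$ for the converse are exactly the paper's moves.

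The one point where your write-up deviates is the placement of the slack coming from $\nu_{0,n-1}(\varphi)<1$. The paper spends it \emph{first}: it replaces $\varphi$ by $\varphi/(1-\epsilon)$ \emph{before} running the OT iteration, so that after AM--GM one has
\[
\int_0 e^{-\frac{2\varphi}{1-\epsilon}-2(1-\delta)\frac{n-1}{n}\sum_i\log|z_i|}\,d\lambda<\infty,
\]
and the H\"older step then trades the $(1-\delta)$ on the weight for a further $\delta'$ on the $\varphi$-exponent, landing at $1/(1-\epsilon+\delta')$ with $\delta'$ chosen smaller than $\epsilon$. In your ordering---OT first with $\varphi$ itself, then H\"older---the absorption can only push the exponent on $\varphi$ strictly above $1$, and the hypothesis $\nu_{0,n-1}(\varphi)<1$ concerns the weight $(n-1)\log|z|$, not $\tfrac{n-1}{n}\sum_i\log|z_i|$, so it gives no post-hoc mechanism to pull the exponent back down. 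The ``compatible order'' you flag in your last paragraph is therefore: slack first, then OT, then H\"older. (For the converse, no extra nudge is needed; Theorem~\ref{berndtssons_sats} already yields the strict inequality directly.)
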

\begin{remark}
Instead of saying that the lemma holds under the condition that the integral of the restriction of $e^{-2 \varphi}$ to the coordinate axes is finite, we can say that it holds for some generic rotation of $e^{-2 \varphi}$.
This is so since if the Lelong number of $\varphi$ is smaller than 1, then a generic rotation of $e^{-2 \varphi}$  is integrable along the coordinate axes.
\end{remark}
We now define the directed Lelong number, due to Kiselman (cf. \cite{Kiselman}).
\begin{df}
For $a_j \geq 0$ the directed Lelong number at a point $w$ is defined as 
$$ \nu_{w} (\varphi,(a_1,...a_n)) = \limsup_{r \rightarrow 0} \frac{ \sup_{|z_i-w_i|=r^{a_i} } \varphi(z_1,..,z_n) }{\log r}.$$
\end{df} 
It is proved in \cite{Kiselman} that the function which the limsup is taken over is increasing, and
so we can exchange the limsup for a limit. Also, for $a_i=1$ for every $i$, we obtain the classical Lelong number.
\begin{lma}
For $a_i = \frac{p_i}{q} \in \mathbb{Q}_+$ the directional Lelong number satisfies
$$\nu_{w} (\varphi,(a_1,...a_n)) = q^{-1}\nu_{w} (\varphi(z_1^{p_1},...,z_n^{p_n})).$$
\end{lma}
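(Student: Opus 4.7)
The plan is a change of variables via the holomorphic map $F(z) = (z_1^{p_1}, \ldots, z_n^{p_n})$. Assume $w = 0$ without loss of generality, and set $\Phi := \varphi \circ F$, which is plurisubharmonic near the origin. Starting from
\[
\nu_0(\varphi, (p_1/q, \ldots, p_n/q)) = \lim_{r \to 0} \frac{\sup_{|z_i| = r^{p_i/q}} \varphi(z)}{\log r}
\]
(the $\limsup$ being a limit by the monotonicity remark preceding the definition of the directed Lelong number), I would substitute $r = s^q$, so that $r^{p_i/q} = s^{p_i}$ and $\log r = q \log s$, giving
\[
q \cdot \nu_0\bigl(\varphi, (p_1/q, \ldots, p_n/q)\bigr) \;=\; \lim_{s \to 0} \frac{\sup_{|z_i| = s^{p_i}} \varphi(z)}{\log s}.
\]

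The heart of the argument is to recognize the right-hand side as the classical Lelong number of $\Phi$ at $0$. For this, observe that the restriction of $F$ to the distinguished boundary $\{|\zeta_i| = s\}$ surjects onto $\{|z_i| = s^{p_i}\}$: any target point $z_i = s^{p_i} e^{i \theta_i}$ is hit by $\zeta_i = s\, e^{i \theta_i / p_i}$. Hence
\[
\sup_{|z_i| = s^{p_i}} \varphi(z) \;=\; \sup_{|\zeta_i| = s} \Phi(\zeta).
\]

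It remains to check that $\lim_{s \to 0} \frac{\sup_{|\zeta_i| = s} \Phi(\zeta)}{\log s}$ agrees with the standard Lelong number $\nu_0(\Phi)$ of the excerpt, which was defined using a sphere. Writing $M_S(s)$, $M_P(s)$, $M_D(s)$ for the supremum of $\Phi$ over the sphere, closed polydisc, and distinguished polydisc boundary of polyradius $(s,\ldots,s)$ respectively, one has $M_P = M_D$ for $\Phi \in \mathrm{PSH}$ by iterating the one-variable subharmonic maximum principle in each coordinate, and the sandwich $M_S(s) \leq M_D(s) \leq M_S(s\sqrt{n})$ (the second inequality also using the maximum principle to push the sup from the ball of radius $s\sqrt n$ out to its boundary) forces all three quotients by $\log s$ to share the common limit $\nu_0(\Phi)$ as $s\to 0$. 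Combining everything yields $q \cdot \nu_w(\varphi, (p_1/q, \ldots, p_n/q)) = \nu_w(\varphi \circ F)$, which is the claim. The only mild subtlety is the polydisc-vs-sphere normalization handled via $M_P = M_D$; beyond this, the proof is a direct substitution together with the elementary surjectivity observation.
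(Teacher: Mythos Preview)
Your proof is correct and follows essentially the same route as the paper: reduce to $w=0$, use homogeneity in the exponents (you do this via the substitution $r=s^q$, the paper by invoking homogeneity to set $q=1$), and then identify the supremum over the distinguished boundary $\{|z_i|=s^{p_i}\}$ with that over $\{|\zeta_i|=s\}$ for $\Phi=\varphi\circ F$ using surjectivity of $\zeta_i\mapsto\zeta_i^{p_i}$ on tori. You are in fact slightly more careful than the paper, which ends at $\sup_{|z_i|=r}\varphi(z_1^{p_1},\ldots,z_n^{p_n})$ without commenting on the passage from the distinguished-boundary supremum to the sphere supremum used in the definition of $\nu_0$; your sandwich argument $M_S(s)\le M_D(s)\le M_S(s\sqrt n)$ fills exactly that gap.
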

\begin{proof}
Since the result is local we can assume that $w=0$. By homogenity of the directional Lelong number it is enough
to consider the case $q=1$.
We have, with $z_i=r_i e^{i \theta_i}$, that
\be \label{dir_lel_expr} 
\sup_{|z_i|=r^{p_i} } \varphi(z_1,..,z_n) = \sup_{r_i=r^{p_i} } \varphi(r_1 e^{i \theta_1},..,r_n e^{i \theta_n}) =
\sup_{r_i=r } \varphi(r_1^{p_i} e^{i \theta_1},..,r_n^{p_n} e^{i \theta_n}).
\ee
Since $p_j \in \mathbb{N}_+$ this last expression is equal to 
$$ \sup_{r_i=r } \varphi(r_1^{p_i} e^{i \theta_1 p_1},..,r_n^{p_n} e^{i \theta_n p_n}) = 
\sup_{|z_i|=r } \varphi(z_1^{p_1} ,..,z_n^{p_n}),
$$
and we are done.
\end{proof}

\begin{prop} \label{thm_gen_dir_lelong_numbers}
Let $\varphi$ be plurisubharmonic in $\Omega$. Then, for a generic rotation of $\varphi$,
$$\nu_{w} (\varphi,(a_1,...a_n))<1$$ 
iff 
$$\int_{w} e^{-\frac{2\varphi}{q} - \sum_{i=1}^n (1-\frac{1}{n p_i})\log|z_i-w_i| } d \lambda(z) < +\infty,$$
for $a_i = \frac{p_i}{q} \in \mathbb{Q}_+$. 
\end{prop}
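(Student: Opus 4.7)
The plan is to chain together three observations: the preceding lemma, which expresses the directional Lelong number with weights $a_i = p_i/q$ as an ordinary Lelong number of a pulled-back function; Lemma \ref{dir_lel_lemma}, which characterises $\nu_0<1$ for a PSH function (under a generic rotation) by the explicit integral with weights $\tfrac{n-1}{n}$ on each coordinate axis; and an explicit change of variables $w_i = z_i^{p_i}$ which will transform the integral produced by Lemma \ref{dir_lel_lemma} into the one appearing in the proposition.

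First I would reduce to the case $w=0$ by a translation. Using the preceding lemma together with the homogeneity $\nu_0(c\varphi)=c\nu_0(\varphi)$, the inequality $\nu_0(\varphi,(p_1/q,\ldots,p_n/q))<1$ is equivalent to $\nu_0(\tilde\varphi)<1$, where
\[
\tilde\varphi(z) \;=\; \frac{1}{q}\,\varphi(z_1^{p_1},\ldots,z_n^{p_n}) \;\in\; \mathrm{PSH}.
\]
Next I would invoke Lemma \ref{dir_lel_lemma} applied to $\tilde\varphi$, together with the remark following it about generic rotations, to rewrite $\nu_0(\tilde\varphi)<1$ as
\[
\int_0 \exp\!\Big(-\frac{2}{q}\varphi(z_1^{p_1},\ldots,z_n^{p_n}) \;-\; \frac{2(n-1)}{n}\sum_{i=1}^n \log|z_i|\Big)\,d\lambda(z) \;<\; +\infty.
\]

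The final step is the change of variables $w_i = z_i^{p_i}$. Working one coordinate at a time in polar form, the map $z_i\mapsto z_i^{p_i}$ is $p_i$-to-one onto $\C$ and yields, after accounting for the multiplicity, the identity $d\lambda(z_i) = p_i^{-1}|w_i|^{-2(p_i-1)/p_i}\,d\lambda(w_i)$; combined with $\log|z_i| = p_i^{-1}\log|w_i|$, the exponent of $|w_i|$ in the rewritten integral simplifies as
\[
-\frac{2(n-1)}{n p_i} \;-\; \frac{2(p_i-1)}{p_i} \;=\; -2\Bigl(1-\frac{1}{n p_i}\Bigr),
\]
so that up to a positive constant the integral becomes exactly the right-hand side of the proposition.

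The main obstacle will be the change-of-variables bookkeeping in the last step, namely simultaneously tracking the complex Jacobian $\prod p_i^2|z_i|^{2(p_i-1)}$, the multiplicity factor $\prod p_i$, and the Lelong-weight factor $\prod|z_i|^{-2(n-1)/n}$, and verifying that the exponent $1-1/(np_i)$ does emerge after the cancellations. A secondary subtlety is reconciling the "generic rotation of $\varphi$" hypothesis in the proposition with the generic-rotation clause needed in Lemma \ref{dir_lel_lemma} applied to $\tilde\varphi$: a unitary rotation of $\varphi$ is not literally the same as a rotation of $\tilde\varphi$, but since $\nu_0(\varphi,(a_1,\ldots,a_n))<1$ forces $\varphi$ itself to have small classical Lelong number, a generic rotation $U$ ensures that $e^{-2\varphi\circ U}$ is integrable along each complex line $\C\cdot U e_i$ through the origin; pulling back by $z\mapsto (z_1^{p_1},\ldots,z_n^{p_n})$ then gives integrability of $e^{-2\tilde\varphi\circ U}$ on each coordinate axis, which is precisely what Lemma \ref{dir_lel_lemma} requires.
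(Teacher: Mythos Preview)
Your proposal is correct and follows essentially the same route as the paper: reduce to $w=0$, use the preceding lemma to rewrite the directional Lelong number as the classical Lelong number of $\tilde\varphi(z)=q^{-1}\varphi(z_1^{p_1},\dots,z_n^{p_n})$, apply Lemma~\ref{dir_lel_lemma}, and then perform the change of variables $z_i^{p_i}=w_i$ to obtain the exponent $1-\tfrac{1}{np_i}$. Your explicit Jacobian bookkeeping and your discussion of how the ``generic rotation of $\varphi$'' hypothesis feeds into the axis-integrability condition needed for Lemma~\ref{dir_lel_lemma} are in fact more careful than what the paper writes out.
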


\begin{proof}
Assume $w=0$.
By the above lemma the hypotesis implies,
$$ \nu_{w} (q^{-1} \varphi(z_1^{p_1},...,z_n^{p_n})) < 1 $$
which by Lemma \ref{dir_lel_lemma} implies that 
\be \label{sumintergal}
\int_0 e^{ - 2q^{-1} \varphi( z_1^{p_1},...,z_n^{p_n}  ) -2 \frac{n-1}{n} \sum_{j = 1}^n \log|z_j|} d \lambda(z) < + \infty.
\ee
By applying the change of coordinates $z_j^{p_j} = y_j$, we see that the integral in (\ref{sumintergal}) is equal to
$$\int_0 e^{ - 2 q^{-1}  \varphi(y_1,...,y_n) -2 \sum_{j = 1}^n \log|y_j|({1- \frac{1}{p_j} + \frac{n-1}{n} \frac{1}{p_j}})} d \lambda(y) $$ 
Since $1- \frac{1}{p_j} + \frac{n-1}{n} \frac{1}{p_j} = 1 - \frac{1}{np_j}$, this equals
$$
\int_0 e^{ - 2q^{-1}\varphi(y_1,...,y_n) -2  \sum_{j = 1}^n \log|y_j|({1- \frac{1}{n p_j}})} d \lambda(y),
$$
which consequently is finite. This proves the "only if" part. However, the exact same argument used in the opposite direction
proves the "if" part.
\end{proof}

\begin{remark}
It is easy to see that $\varphi$ must satisfy some condition in terms of integrability.
Take for instance $\varphi = \log{z_1}$ for which $\int_{ \{ z_2=...=z_n=0\}} e^{-2 \varphi} d \lambda(z) = + \infty$. Then, it is easily verified that $\nu_{w} (\varphi,(1,...1)) = 1$ but of course,
$$\int_{w} e^{-2\varphi - \sum_{i=1}^n (1-\frac{1}{n})\log|z_i-w_i| } d \lambda(z) = +\infty.$$
\end{remark}
One knows that for the weight $ \psi = \log \max_i {|z_i|^{1/a_i} }$
the relative type, $\sigma(\varphi, \psi)$ and Demailly's generalized Lelong number $\nu_{Demailly}(\varphi, \psi)$ (defined in the introduction) both coincide with Kiselman's directed Lelong number.
However, it is unkown if there exists a weight $\psi$ for which $\nu_{z,\psi}$ coincides
with the directional Lelong number in more than the generic sense found above.
\def\listing#1#2#3{{\sc #1}:\ {\it #2}, \ #3.}

\end{document}